\documentclass[11pt]{amsart}
\usepackage[margin=3cm]{geometry} 
\usepackage{amsmath}
\usepackage{amssymb}
\usepackage{amsthm}
\usepackage{float}
\usepackage{ulem}
\usepackage{xcolor}
\usepackage[utf8x]{inputenc}
\usepackage{comment}
\usepackage{framed}
\usepackage{caption}

\newcommand{\abs}[1]{\left\vert#1\right\vert}

\newcommand{\eps}{\varepsilon}

\DeclareMathAlphabet{\mathbbb}{U}{bbold}{m}{n}

\newtheorem{thm}{Theorem}[section]

\newtheorem{lem}[thm]{Lemma}

\theoremstyle{definition}
\newtheorem{defn}[thm]{Definition}
\newtheorem{ex}[thm]{Example}
\theoremstyle{remark}

\numberwithin{equation}{section}
\usepackage{graphicx}

\usepackage{enumerate}

\newcommand{\threearray}[3]{\begin{aligned}&{#1}\\&{#2}\\&{#3}\end{aligned}}

\newcommand{\matrice}{\begin{pmatrix}}
\newcommand{\ok}{\end{pmatrix}}

\newcommand{\bd}{\partial}

\begin{document}
\title[]{Maximisers for the second Neumann eigenvalue of annuli in two-dimensional space forms}

\author[Provenzano]{Luigi Provenzano}
\address{Dipartimento di Matematica e Informatica, Università degli Studi di Cagliari, Via Ospedale 72 - 09124 Cagliari, Italy, e-mail: {\sf luigi.provenzano@unica.it}.}
\author[Savo]{Alessandro Savo}
\address{Dipartimento di Scienze di Base e Applicate per l'Ingegneria, Universit\`a di Roma ``La Sapienza'', Via Scarpa 12 - 00161 Roma, Italy, e-mail: {\sf alessandro.savo@uniroma1.it}.}

\begin{abstract}
We prove a sharp Szeg\"o-Weinberger-type inequality for balanced doubly connected domains in $\mathbb S^2$, $\mathbb R^2$ and $\mathbb H^2$.
More precisely, the first positive Neumann eigenvalue is bounded above by that of the corresponding rotationally symmetric annulus, determined by the area of the domain and the area of the smallest component of the complement. Equality holds only for the rotationally symmetric annulus. We also show that the balancing assumption is essential: if we drop it, the inequality fails in general.
\end{abstract}

\keywords{Isoperimetric inequality, Neumann eigenvalue, prescribed level lines, capacitary function, doubly connected domains, space forms}
\subjclass{35P15, 58J50}

\thanks{The authors acknowledge the support of the INdAM GNSAGA group.}

\maketitle

\section{Introduction and statement of the main results}

Throughout this paper, we denote by $M_K$ the two-dimensional simply connected space form of constant curvature $K=-1$, $0$ or $1$: $M_{-1}=\mathbb H^2$ (hyperbolic plane), $M_0=\mathbb R^2$ (Euclidean plane) and $M_1=\mathbb S^2$ (round two-sphere).
Let $\Omega$ be a smooth bounded domain of $M_K$. Consider
\begin{equation}\label{neumann}
\begin{cases}
\Delta u=\mu u\,, & {\rm in\ }\Omega\,,\\
\partial_Nu=0\,, & {\rm on\ }\partial\Omega,
\end{cases}
\end{equation}
where $N$ is the unit inner normal. The eigenvalues are denoted by
$$
0=\mu_1(\Omega)<\mu_2(\Omega)\leq\cdots\leq\mu_j(\Omega)\leq\cdots\nearrow+\infty.
$$

The following result is now well-known:

\begin{thm}\label{main}
Let $\Omega$ be a simply connected domain of $M_K$. Then
\begin{equation}\label{mainresult}
\mu_2(\Omega)\leq\mu_2(\Omega^{\star}),
\end{equation}
where $\Omega^{\star}$ is a geodesic disk with $|\Omega|=|\Omega^{\star}|$. Equality holds if and only if $\Omega$ is congruent to  $\Omega^{\star}$.
\end{thm}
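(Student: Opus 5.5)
We would follow Szegő's conformal transplantation method, adapted to $M_K$ as in the work of Bandle. The argument uses simple connectivity only through the existence of a conformal map onto the model disk and the isoperimetric inequality for simply connected domains; it needs no hemisphere restriction on $\mathbb S^2$.

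\emph{Setup.} Let $\Omega^\star=B_R$ be the geodesic disk with $|B_R|=|\Omega|$. Write its second Neumann eigenfunctions as $g(r)\cos\theta$ and $g(r)\sin\theta$, in geodesic polar coordinates about the center. Here $g(0)=0$ and $g$ is increasing on $[0,R]$; this monotonicity should be checked from the radial ODE, including for caps larger than a hemisphere when $K=1$. By uniformization and simple connectivity there is a conformal diffeomorphism $f\colon B_R\to\Omega$, so $f^*g_{M_K}=J\,g_{M_K}$ with $\int_{B_R}J\,dA=|\Omega|=|B_R|$.

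\emph{Test functions and orthogonality.} Set $u_1=(g\cos\theta)\circ f^{-1}$ and $u_2=(g\sin\theta)\circ f^{-1}$. Precomposing $f$ with a conformal automorphism of $B_R$ (a Möbius map of the disk model), a Hersch-type degree argument gives $\int_\Omega u_1=\int_\Omega u_2=0$. This uses the map $a\mapsto\int_{B_R} (g\,e^{i\theta})\circ\phi_a\, J_a\,dA$ on the disk of automorphism parameters and its boundary behaviour. Then $\mu_2(\Omega)\int_\Omega(u_1^2+u_2^2)\le\int_\Omega(|\nabla u_1|^2+|\nabla u_2|^2)$.

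\emph{Numerator and denominator.} In dimension two the Dirichlet integral is conformally invariant, so the right-hand side equals $\int_{B_R}(g'^2+g^2/s_K(r)^2)\,dA=\mu_2(B_R)\int_{B_R}g^2\,dA$. Here $s_K(r)=\sin r,\ r,\ \sinh r$. The denominator is $\int_{B_R}g(r)^2J\,dA$. Thus it suffices to show
$$\int_{B_R}g^2J\,dA\ \ge\ \int_{B_R}g^2\,dA .$$
Because $g^2$ is increasing and $J\,dA$ and $dA$ have equal total mass, the layer-cake formula reduces this to the mass inequality $A(r):=\int_{B_r}J\,dA\le|B_r|$ for all $r\le R$.

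\emph{The mass inequality (main step).} Let $L(r)=\int_{\partial B_r}\sqrt J\,ds$, which is the length of $\partial f(B_r)$. Cauchy--Schwarz gives $L(r)^2\le |\partial B_r|\,A'(r)$. The set $f(B_r)$ is simply connected in $M_K$, so the isoperimetric inequality in $M_K$ gives $L^2\ge 4\pi A-KA^2=:\varphi(A)$. Hence $A'(r)\ge\varphi(A(r))/|\partial B_r|$, with equality for the model function $V(r)=|B_r|$. Integrating from $r$ to $R$ yields
$$\int_{A(r)}^{|B_R|}\frac{da}{\varphi(a)}\ \ge\ \int_r^R\frac{d\rho}{|\partial B_\rho|}=\int_{|B_r|}^{|B_R|}\frac{da}{\varphi(a)},$$
using $A(R)=V(R)$. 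Since $\varphi>0$ on the relevant range, this gives $A(r)\le|B_r|$. This step is the crux. For $K=1$ one must check that $\varphi>0$ on $(0,|\Omega|)$, which holds since $|\Omega|<4\pi$. This is exactly where the argument works on the whole sphere without a hemisphere assumption.

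\emph{Equality.} If $\mu_2(\Omega)=\mu_2(B_R)$, all inequalities are equalities. Then every $f(B_r)$ attains equality in the isoperimetric inequality, so it is a geodesic disk, and Cauchy--Schwarz equality makes $J$ constant on circles. With the ODE equality this forces $J\equiv1$, so $f$ is an isometry and $\Omega$ is congruent to $\Omega^\star$.
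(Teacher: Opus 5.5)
There is a genuine gap, and it lies in the step you deferred. It breaks down exactly in the case you say your argument handles. Your layer-cake reduction of $\int_{B_R}g^2J\,dA\ge\int_{B_R}g^2\,dA$ to the mass inequality requires $g$ to be nondecreasing on $[0,R]$. On $\mathbb S^2$ this is false for caps close to the whole sphere.

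Put $h=s_Kg'$ and $\mu=\mu_2(B_R)$. The radial equation gives $h'=s_K\,(s_K^{-2}-\mu)\,g$, with $h(0)=h(R)=0$ and $g>0$ on $(0,R]$.
\begin{itemize}
\item When $s_K^{-2}$ is decreasing on $(0,R)$ (that is, $K=0,-1$, or $K=1$ with $R\le\pi/2$), $h'$ changes sign exactly once, from $+$ to $-$. So $h>0$ and $g$ is increasing, and your argument works.
\item If $K=1$ and $\mu\sin^2R<1$, then $h'>0$ on some interval $(r_0,R)$. Since $h(R)=0$, this forces $h<0$ there, so $g$ decreases near $\partial B_R$.
\end{itemize}
Such caps exist. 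The test function $\sin r\cos\theta$ gives $\mu_2(B_R)\le\frac{4+c+c^2}{(1-c)(2+c)}$ with $c=\cos R$. For $c=-0.8$ (area $3.6\pi$) this yields $\mu_2(B_R)\sin^2R\le 0.64$.

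In that regime the identity $\int_{B_R}g^2(J-1)\,dA=-\int_0^R(g^2)'(A-V)\,dr$ has a contribution of the wrong sign on $(r_0,R)$. Your bound $A\le V$, which you prove correctly, then no longer yields the inequality you need. So your argument proves the theorem on $\mathbb R^2$, on $\mathbb H^2$, and on $\mathbb S^2$ up to (somewhat beyond) the hemisphere. That is essentially the Szeg\"o--Bandle range, not the full statement. On large caps the obstruction is not the positivity of $\varphi$ but the monotonicity of the transplanted profile $g$.

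The method the paper relies on (from \cite{SP26}, adapted here to annuli via the capacitary function) is built to avoid exactly this.
\begin{itemize}
\item After the gauge change with potential $\star d\psi$, where $\psi$ is the Green function with pole $f(0)$, your test functions $(g\,e^{i\theta})\circ f^{-1}$ become real functions constant on the level lines of $\psi$.
\item Instead of keeping the model profile $g$ and comparing denominators, one minimises over all profiles.
\item In the area variable the denominator is $\int_0^Sf^2\,da$ for both $\Omega$ and $\Omega^\star$. The domain enters only through the weight in $\int_0^S(Gf'^2+f^2/G)\,da$.
\item The isoperimetric inequality gives $G\ge G^\star$ pointwise. This is precisely your inequality $A'\ge\varphi(A)/|\partial B_r|$ before integration.
\item The first eigenvalue is monotone in the weight (the analogue of Lemma \ref{32}, proved via Hellmann--Feynman and the Riccati bound $|G_tf_t'/f_t|<1$). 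This gives $\kappa_1(\Omega,A)\le\kappa_1(\Omega^\star,A^\star)=\mu_2(\Omega^\star)$ with no monotonicity of any eigenfunction, which is what makes the whole sphere accessible.
\end{itemize}
Your orthogonality step via the conformal group of the disk is the same ingredient the paper uses.
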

When $K=0$, i.e., for $\mathbb R^2$, Theorem \ref{main} was proved by Szeg\"o \cite{szego_1} in 1954. For $\mathbb H^2$, Theorem \ref{main} was proved by Bandle \cite{bandle2} in 1972. In the same paper, Theorem \ref{main} is proved also for simply connected domains $\Omega$ in $\mathbb S^2$ with $|\Omega|\leq 2\pi$. In higher dimensions, Theorem \ref{main} holds for domains in $\mathbb R^n$ with no  topological assumptions thanks to an argument of Weinberger \cite{weinberger}. It holds for domains in $\mathbb H^n$ and in the hemisphere $\mathbb S^n_+$, as well, again with no topological constraints, see \cite{ash_beng}. The spherical case is much more involved, and, quite surprisingly, the inequality does not hold in general if one removes the hypothesis of simple connectivity, see \cite{bucur_gafa} for a counterexample in $\mathbb S^2$ with five boundary components. In \cite{SP26} we completed the study of the spherical case proving Theorem \ref{main}, and showing that there exist counterexamples in $\mathbb S^2$ with just two boundary components (i.e., spherical annuli).

\smallskip

In view of the counterexample in \cite{SP26}, it is natural to ask whether an isoperimetric inequality holds if we restrict to bounded {\it doubly connected domains} in $M_K$, that is, domains diffeomorphic to  $\mathbb S^1\times(-1,1)$. Results in this spirit already exist in the literature. In \cite{verma_santhanam} the authors consider the following problem: maximise the second Neumann eigenvalue of $D\setminus B$ with respect to $D$, where $B$ is a fixed ball in $\mathbb R^n$ centered at $p$ and $D$ is a domain symmetric with respect to $p$, satisfying $\overline B\subset D$. The maximisers are concentric shells. The result holds also for non-compact rank-1 symmetric spaces. In \cite{ABD_SIMA} the authors consider domains of the form $\Omega_{out}\setminus\overline{\Omega_{in}}$, where $\Omega_{out}$, $\Omega_{in}$ are domains in $\mathbb R^n$ and $\overline\Omega_{in}\subset\Omega_{out}$. Let  $B_1,B_2$ concentric balls such that $B_1$ is contained in $\Omega_{in}$ and $|B_2\setminus\overline{B_1}|=|\Omega_{out}\setminus\overline{\Omega_{in}}|$. The authors prove that $\mu_2(\Omega_{out}\setminus\overline{\Omega_{in}})\leq\mu_2(B_2\setminus\overline{B_1})$, with equality if and only if $\Omega_{out}\setminus\overline{\Omega_{in}}=B_2\setminus\overline{B_1}$. In \cite{anoop_verma} the result is extended to simply connected space forms. However, a genuine-type of isoperimetric inequality would involve the prescription of the total area of the domain $\Omega_{out}\setminus\overline{\Omega_{in}}$ and the area of $\Omega_{in}$, which is a question asked in \cite{ABD_SIMA}. The kind of statement in \cite{ABD_SIMA},  \cite{anoop_verma} and \cite{verma_santhanam} is constrained by the use of the method of Weinberger \cite{weinberger}.

\smallskip In this paper we are able to prove an isoperimetric inequality for {\it balanced} doubly connected domains in $M_K$ (see Definition 1.2 below). More precisely, we prove that the first positive Neumann eigenvalue is bounded above by that of the corresponding rotationally symmetric annulus uniquely determined (up to congruence) by the area of the domain, and the area of the smallest connected component of the complement. To the best of our knowledge, no sharp Szeg\"o–Weinberger-type inequality is available already in dimension $2$, in particular in $\mathbb R^2$ for doubly connected (symmetric/balanced) domains under the simultaneous prescription of the volume of the domain and of its hole. In order to state our main result, we recall some definitions:

 \begin{defn}\label{defn_sym}
Let $\Omega\subset M_K$ be a bounded doubly connected domain. We say that it is {\it balanced} if it is isometric to the flat cylinder $C_L=(-L,L)\times\mathbb S^1$ with the conformal metric $\rho^2 (dz^2+d\theta^2)$ and $\int_{\mathbb S^1}\rho^2(z,\theta)e^{i\theta}d\theta=0$ for all $z\in(-L,L)$.
\end{defn}
For a bounded doubly connected  domain $\Omega$ we set
$$
\overline\Omega^c=\omega_1\cup\omega_2,
$$ 
where $\omega_1,\omega_2$ are disjoint domains. Our convention will be that $\omega_1$ has the smallest area:
$$
|\omega_1|\leq|\omega_2|,
$$
Note that for $M_0=\mathbb R^2$ or $M_{-1}=\mathbb H^2$, $\omega_2$ is unbounded, hence $|\omega_2|=\infty$ and the inequality is strict. Moreover, $\omega_1$ is always simply connected.

\begin{ex}
A simple class of balanced domains, besides round annuli, is provided by doubly connected planar domains which are symmetric with respect to both coordinate axes. Let $(x,y)$ denote the usual Cartesian coordinates in $\mathbb R^2$ and let $\Omega$ be such a domain. One then finds a conformal map $\Phi:\Omega\to C_L$ with conformal factor $\rho$ satisfying
$$
\rho^2(z,\theta+\pi)=\rho^2(z,\theta).
$$
Consequently $\int_{\mathbb S^1}\rho^2(z,\theta)e^{i\theta}\,d\theta=0$
for every $z\in(-L,L)$. The same observation applies to $\mathbb S^2$ and $\mathbb H^2$. 
\end{ex}

\begin{defn}
Let $p\in M_K$ and let $(r,\theta)\in[0,+\infty)\times[0,2\pi)$ be the polar coordinates based at $p$ (when $M_K=\mathbb S^2$, it is understood that $r\leq\pi$). Let $0<r_1<r_2$. We define
\begin{equation}\label{A12}
A_{r_1,r_2}:=\{(r,\theta):r_1<r<r_2\}
\end{equation}
the annulus of $M_K$ of radii $r_1,r_2$. Without loss of generality, in the case  of $\mathbb S^2$, we assume that
$$
r_1<\pi-r_2.
$$
\end{defn}
Let $A_{r_1,r_2}$ be an annulus in $M_K$ centered at some $p\in M_K$. Denote by $d_1$ the geodesic disk in $M_K$ centered at $p$ of radius $r_1$. We are now in position to give the following:

\begin{defn}\label{omstar}
Let $\Omega$ be a bounded doubly connected domain in $M_K$. Then, up to isometries, there exists a unique annulus $A_{r_1,r_2}$ in $M_K$ such that $|\Omega|=|A_{r_1,r_2}|$ and $|\omega_1|=|d_1|$. For any bounded doubly connected domain $\Omega$ we set
$$
\Omega^\star:=A_{r_1,r_2}.
$$
\end{defn}

Here is the main result:

\begin{thm}\label{main_isop_annuli}
Let $\Omega$ be a balanced, bounded doubly connected domain in $M_K$. Then
\begin{equation}\label{main_formula_annuli}
\mu_2(\Omega)\leq\mu_2(\Omega^\star).
\end{equation}
Equality holds if and only if $\Omega$ is congruent to $\Omega^\star$.
\end{thm}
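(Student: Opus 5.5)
We argue by a Szegő-type conformal transplantation, using the conformal model of Definition \ref{defn_sym} and test functions built from the eigenfunction of $\Omega^\star$. Let $\Omega^\star=A_{r_1,r_2}$. Since $\mu_2(\Omega^\star)$ is simple or has a rotationally invariant eigenspace, we first identify its eigenfunctions. For the range of radii fixed in Definition \ref{omstar}, we will check that $\mu_2(\Omega^\star)$ has eigenfunctions of the form $g(r)\cos\theta,\ g(r)\sin\theta$. This requires comparing the first nonzero radial eigenvalue with the first angular-mode eigenvalue on the annulus. We expect this to hold in the flat and hyperbolic cases. On $\mathbb S^2$ it should follow from $r_1<\pi-r_2$, via a monotonicity argument in the spirit of \cite{SP26}. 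We then transport $g$ to the cylinder: $A_{r_1,r_2}$ is conformal to $C_{L^\star}$ with radial conformal factor $\rho^\star(z)$, and we write $g=G(z)$.

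The test functions on $\Omega$ are $u_1=G(\phi(z))\cos\theta$ and $u_2=G(\phi(z))\sin\theta$, where $(z,\theta)\in C_L$ are the balanced coordinates of $\Omega$. Here $\phi:(-L,L)\to(-L^\star,L^\star)$ is an increasing reparametrisation chosen so that the area of the sub-cylinder $\{z'<z\}$ in $\Omega$ equals that of $\{z'<\phi(z)\}$ in $\Omega^\star$. This is the ``prescribed level lines'' device: it matches the area of $\omega_1$ plus slabs, and it uses $|\omega_1|=|d_1|$ and $|\Omega|=|\Omega^\star|$. The balancing condition $\int\rho^2 e^{i\theta}d\theta=0$ gives $\int_\Omega u_j\,dA=0$ directly, since $G(\phi(z))$ depends only on $z$. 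So the $u_j$ are admissible, and
\[
\mu_2(\Omega)\sum_j\int_\Omega u_j^2 \le \sum_j\int_\Omega|\nabla u_j|^2.
\]
By conformal invariance of the Dirichlet integral, the right side equals
\[
\int_{C_L}\bigl(G'(\phi)^2\phi'^2+G(\phi)^2\bigr)\,dz\,d\theta,
\]
while the left side equals $\mu_2(\Omega)\int G(\phi)^2\rho^2\,dz\,d\theta$.

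It remains to compare these with the quantities on $\Omega^\star$. The denominator equals $\int G^2 \,dA$ with the matched area distribution, and by construction it coincides with $\int_{\Omega^\star}g^2$. For the numerator, change variables $w=\phi(z)$. The term $\int G(\phi)^2 dz$ becomes $\int G(w)^2/\phi'\,dw$, and the gradient term becomes $\int G'(w)^2\phi'\,dw$. Their sum has to be compared with the values at $\phi'\equiv L/L^\star\cdot$(adjusted). This is the main obstacle. The modulus $L$ of $\Omega$ versus $L^\star$ must be controlled by an isoperimetric/capacity inequality: we expect $L\ge L^\star$, in the sense that for fixed areas the symmetric annulus has the smallest modulus (the capacitary function). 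We also need a monotonicity of $G$ and $G'$, which should be derived from the ODE, to absorb the $\phi'$ weights. The route we would try first is to choose $\phi$ instead via level lines of $\rho^2$-averaged area and to use Jensen/Cauchy–Schwarz on each circle, $\int\rho^2d\theta\ge$ the area constraint. The angular term $\int G^2/\phi'$ is then handled by the Pólya–Szegő rearrangement of the capacitary potential, and the radial term by $G'$ vanishing at the endpoints.

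Equality forces equality in Cauchy–Schwarz on each circle, hence $\rho$ is radial. Then $\Omega$ is a rotationally symmetric annulus with the same areas, so $\Omega$ is congruent to $\Omega^\star$.
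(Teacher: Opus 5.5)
Your test space is the right one. Functions $F(z)\cos\theta$, $F(z)\sin\theta$ on the conformal cylinder, made orthogonal to constants by the balancing condition, are exactly the paper's radial functions for $\Delta_A$ with the gauge undone (on $C_L$, $e^{-i\Theta}u=u\,e^{i\theta}$). There is a genuine gap in the comparison of numerators, however, and with your choice of profile it cannot be closed.

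Write $P$ for your profile (the function you call $G$). Let $G(a)=\int_{\psi=\beta(a)}\abs{d\psi}^{-1}$ be the paper's weight; on $C_L$ it equals $\int_0^{2\pi}\rho^2\,d\theta$ on the level enclosing area $a$. Your area-matching map then satisfies $\phi'=G/G^\star$ at corresponding levels. Lemma~\ref{lem31} gives $G\geq G^\star$ (this is what your Cauchy--Schwarz-plus-isoperimetric remark proves), so $\phi'\geq 1$. Consequently the zero-order term $\int P^2/\phi'$ decreases, but the gradient term $\int (P')^2\phi'$ increases.

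In the area variable your test profile is exactly the first eigenfunction $f^\star$ of the one-dimensional problem with weight $G^\star$. Its quotient is $2\pi\int_0^S\bigl(G((f^\star)')^2+(f^\star)^2/G\bigr)\,da$ divided by $\int_0^S(f^\star)^2\,da$. The difference of integrands from the $\Omega^\star$ case is $(G-G^\star)\bigl(((f^\star)')^2-(f^\star)^2/(GG^\star)\bigr)$. This is positive wherever $G/G^\star$ is large and $(f^\star)'\neq 0$.

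That situation does occur for balanced domains with the same $S$ and $h_1$. For instance, attach long thin fingers to the outer boundary $\Gamma_2$, symmetric with respect to both axes. Inside them $\abs{d\psi}$ is exponentially small, so $G/G^\star$ is huge on an $a$-interval of fixed length, and your quotient can be made arbitrarily large. Neither $P'$ vanishing at the endpoints nor P\'olya--Szeg\"o rescues this. Note also that your expected modulus inequality is reversed: $2L=\int_0^S da/G\leq\int_0^S da/G^\star=2L^\star$.

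The missing idea is to not transplant at all. Use on $\Omega$ its own optimal radial profile and compare one-dimensional eigenvalues. By Lemma~\ref{two} the best quotient is $\kappa_1(G)=2\pi\min_f\int_0^S(Gf'^2+f^2/G)\,da/\int_0^Sf^2\,da$, and you must show that $\kappa_1$ is nonincreasing in the weight.

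The paper does this by interpolating $G_t=(1-t)G^\star+tG$. Feynman--Hellmann gives $\frac{d}{dt}\kappa_1(G_t)=2\pi\int_0^S\bigl(f_t'^2-f_t^2/G_t^2\bigr)(G-G^\star)\,da$. The ratio $R=G_tf_t'/f_t$ satisfies $R'=(1-R^2)/G_t-\kappa_1(G_t)/2\pi$ with $R(0)=R(S)=0$, which forces $\abs{R}<1$ on $(0,S)$. Hence the derivative is $\leq 0$, and strictly negative if $G>G^\star$ on a set of positive measure; this also gives the equality case.

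For the single function $f^\star$ you only have $\abs{G^\star(f^\star)'}<\abs{f^\star}$. That controls the variation to first order at $G^\star$, but not the finite change above, which is exactly why the interpolation is needed.

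Finally, you only conjectured that $\mu_2(\Omega^\star)=\mu_{11}<\mu_{02}$. It holds for all radii in every $M_K$, with no appeal to $r_1<\pi-r_2$: $v_{02}'$ solves the $k=1$ equation with Dirichlet conditions, so $\mu_{02}\geq\lambda_{11}>\mu_{11}$.
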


The proof follows the same arguments as in Theorem \ref{main} in \cite{SP26}, replacing, in the present case,  the Green function with a suitable {\it capacitary function} of the doubly connected domain $\Omega$ (that is, a function which is harmonic in $\Omega$ and locally  constant on the boundary). Its level lines provide a smooth foliation of $\Omega$; we perform a change a gauge and look at test functions which are real and constant on these level lines. We need the requirement that the domain is balanced since the conformal group of the annulus is poor and does not allow for a fixed point argument guaranteeing the existence of a test function orthogonal to the zero eigenfunction (in the simply connected case of \cite{SP26} we used the conformal group of the disk, which is richer).

\smallskip

We remark that the  balancing hypothesis in Theorem \ref{main_isop_annuli} cannot be dropped. For $\mathbb R^2$, this has been shown  in \cite{ABD_SIMA}, building on the papers \cite{kuttler_wave,yee_wave}; the counterexample in [1] is given by eccentric annuli, in the following sense. Let $B$ be the unit disk in $\mathbb R^2$ centered at $0$ and let $B_r(p)$ be the disk of radius $r$ centered at $p\in B$ with $r<1$ and $|p|<1-r$. When $|p|=r=\frac 14$, one has $\mu_2(B\setminus\overline B_r(0))<\mu_2(B\setminus\overline B_r(p))$. Note that the domain $B\setminus\overline B_r(p)$ is not balanced. Hence, inequality \eqref{main_formula_annuli} cannot hold in general without some ``symmetry'' or ``balancing''  assumption. The same construction can be reproduced in $\mathbb S^2$ and $\mathbb H^2$: just take the following domains: $\Omega_\eps:=B_\eps(q)\setminus \overline B_{r\eps}(q')$ and $\Omega_\eps^\star:=B_\eps(q)\setminus \overline B_{r\eps}(q)$, where $q$ is some fixed point, $q'$ is a point at distance $\frac\eps 4$ from $q$, and $r=\frac 14$. Here we are denoting by $B_s(q),B_s(q')$ the geodesic disks centered respectively at $q,q'$ of radius $s>0$.  After rescaling the metric by $\eps^{-2}$, the Neumann eigenvalues of $\Omega_\eps$ and $\Omega_\eps^\star$ multiplied by $\eps^2$ converge respectively to those of the  Euclidean domains $B\setminus\overline B_r(p)$ and $B\setminus\overline B_r(0)$ discussed above (and in \cite{ABD_SIMA}), hence the strict Euclidean inequality proved in \cite{ABD_SIMA} persists for all sufficiently small $\eps$.

\smallskip

We also remark that the same technique applies, in a simpler way, to the first eigenvalue of the magnetic Laplacian with a closed potential $1$-form of non-integral flux on doubly connected domains of $M_K$, under magnetic Neumann boundary conditions; see \cite{CPS22} for basic information on this problem. We will address the magnetic counterpart in a forthcoming paper.

\smallskip

{\bf Organization of the paper.} The paper is organised as follows: in Section \ref{sec:2} we recall a few useful facts on the magnetic Laplacian and the change of gauge. In Section \ref{sec_radialspectrum} we introduce the notion of radial spectrum: it is a variational spectrum associated with functions that are real and constant on the level lines of the capacitary function. It will be the main ingredient in the proof of the main result. In Section \ref{sec:4} we describe the proof of Theorem \ref{main_isop_annuli}, which is based on Theorems \ref{three}, \ref{four} and \ref{barycenter} proved respectively in Sections \ref{sub_3}, \ref{sub_4} and \ref{sec_barycenter}.

\section{A few useful facts on the magnetic Laplacian}\label{sec:2}

Throughout the paper, $\Omega$ will always denote a bounded doubly connected domain in $M_K$.

Let $A$ be a smooth closed $1$-form in $\Omega$. We denote by $\nu$ the {\it flux} of $A$ along $c$, namely 
$$
\nu\doteq\frac{1}{2\pi}\oint_{c}A
$$
where $c$ is a simple, closed, oriented curve in $\Omega$ homotopic to any of the two boundary components of $\Omega$. The choice of the orientation will not play a role in our estimates; however we understand that $c$ is travelled once in the counterclockwise direction. Let $d^A$ denote the magnetic differential: $d^Au=du-iuA$ (if $A=0$ it is the standard differential), and let $\delta^A$ its formal $L^2$-adjoint  (magnetic co-differential). Then $\delta F=-{\rm div}F$ if $F$ is a $1$-form.

The magnetic Laplacian is defined as $\Delta_A\doteq\delta^Ad^A$. If $A=0$ then $\Delta_A=\Delta$ is the usual Laplacian.  We consider the Neumann problem for the magnetic Laplacian
$$
\begin{cases}
\Delta_Au=\lambda u\,, & {\rm in\ }\Omega\\
d^Au(N)=0\,, & {\rm on\ }\partial \Omega.
\end{cases}
$$
Here $N$ is the unit inner normal to $\partial \Omega$.  The spectrum is discrete, made of non-negative eigenvalues of finite multiplicity (see e.g., \cite{CPS22}):
$$
0\leq\lambda_1(\Omega,A)\leq\lambda_2(\Omega,A)\leq\cdots\leq\lambda_j(\Omega,A)\leq\cdots\nearrow+\infty
$$
The eigenvalues are characterized by
\begin{equation}\label{minmaxA}
\lambda_j(\Omega,A)=\min_{{\substack{U\subset H^1(\Omega)\\{\rm dim}U=j}}}\max_{0\ne u\in U}\frac{\int_{\Omega}|d^Au|^2}{\int_{\Omega}|u|^2}.
\end{equation}

When $A=0$ we have the usual Neumann problem \eqref{neumann} for the Laplacian on $\Omega$: on $\partial\Omega$
$$
du(N)=0\iff\partial_Nu=0
$$
Recall that the usual Neumann eigenvalues (for the Laplacian) are characterized by
$$
\mu_j(\Omega)=\min_{{\substack{U\subset H^1(\Omega)\\{\rm dim}U=j}}}\max_{0\ne u\in U}\frac{\int_{\Omega}|du|^2}{\int_{\Omega}|u|^2}.
$$

\subsection{Orientation and Hodge $\star$}
On $\mathbb R^2$ we consider the standard orientation given by the positively oriented basis $(e_1,e_2)$. The standard orientation of $\mathbb H^2$ is the same as $\mathbb R^2$ when considering the Poincaré disk model. Consider $\mathbb S^2\subset\mathbb R^3$, and let $X=(x_1,x_2,x_3)\in\mathbb S^2$ be the unit  vector field normal to $\mathbb S^2$ (pointing outwards). Let $p\in\mathbb S$. Let $(e_1,e_2)$ be an orthonormal basis of $T_p\mathbb S^2$. We say that it is positively oriented if $(e_1,e_2,X)$ is a positively oriented basis of $\mathbb R^3$.

Next, we introduce the Hodge-star operator $\star$ acting on tangent vectors, which is uniquely determined by the following condition: if $v$ is a unit vector in $T_pM_K$, then  $(v,\star v)$ is a positively oriented basis of $T_pM_K$. In other words, $\star$ is simply rotation by $\pi/2$, so that $\star$ is an isometry satisfying $\star^2=-I$. We can think of $\star$ as acting on a smooth $1$-form $A$ by the rule:
$$
\star A(v)=-A(\star v);
$$
In particular, for a smooth function $\psi$ one sees that $\star d\psi$ is the dual $1$-form of the vector field normally denoted $\nabla^{\perp}\psi$.

\subsection{The capacitary function and the magnetic potential}

Let $\Omega\subset M_K$ be a bounded doubly connected domain, recall that $\overline\Omega^c=\omega_1\cup\omega_2$ where $\omega_1$ is the component of the complement having smallest area. Then 
$$
\partial\Omega=\Gamma_1\cup\Gamma_2
$$
where $\Gamma_1,\Gamma_2$ are disjoint topological circles. Our convention is that $\Gamma_1=\bd\omega_1$.
For a bounded doubly connected domain, the role played by the Green function in \cite{SP26} will be played by the {\it capacitary function} $\psi$, which is the  unique solution of
\begin{equation}\label{green}
\begin{cases}
\Delta\psi=0\,, & {\rm in\ }\Omega\,,\\
\psi=0\,,& {\rm on\ }\Gamma_1\,,\\
\psi=c\,, & {\rm on\ \Gamma_2}
\end{cases}
\end{equation}
where $c>0$ is uniquely determined by imposing that the flux of $\star d\psi$ along $\Gamma_1$ is equal to $1$, which in turn translates into the equality $\frac{1}{2\pi}\int_{\Gamma_1}\partial_N\psi=1$ (see below). Note that $\psi$ is positive, smooth and harmonic in $\Omega$. By $C_L$ we denote the flat cylinder $(-L,L)\times\mathbb S^1$. Recall that any bounded doubly connected domain $\Omega$ is conformally equivalent to $C_L$ for a unique $L>0$ called the {\it conformal modulus} of $\Omega$. We will show that
$$
c=2L.
$$
We denote by $(z,\theta)$ the coordinates on $C_L$. The standard orientation on $C_L$ is such that $\star dz=-d\theta$.

\begin{lem}\label{lemma_conformal} We have:
\begin{enumerate}[i)]
\item The capacitary function $\psi_L$ of the flat cylinder $C_L$, chosen so that $\frac{1}{2\pi}\int_{\mathbb S^1}\partial_z\psi_L(-L,\theta)d\theta=1$ is given by:
$$
\psi_L(z,\theta)=L+z.
$$
\item The capacitary function is conformally invariant: if $\Omega$ and $\Omega'$ are bounded doubly connected domains and $\Phi:\Omega'\to\Omega$ is a conformal diffeomorphism, and $\psi$ is the capacitary function of $\Omega$, then $\psi\circ\Phi=\Phi^{\star}\psi$ is the capacitary function of $\Omega'$.

\item If $\Omega$ has conformal modulus $L$, then in \eqref{green} we have $c=2L$.

\item $\psi$ has no critical points in $\overline\Omega$: if $\Phi:\Omega\to C_L$ is a conformal map, then $\psi=\Phi^{\star}\psi_L$, and $\psi_L$ has no critical points in $\overline{C_L}$.
\end{enumerate}
\end{lem}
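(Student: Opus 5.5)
The plan is to verify the four items in order, using only harmonicity, the conformal invariance of the Dirichlet integral in dimension two, and the uniqueness of the solution of \eqref{green}.

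For i), on $C_L$ with the flat metric $dz^2+d\theta^2$, the function $\psi_L=L+z$ is harmonic. It vanishes on $\{z=-L\}$ and equals $2L$ on $\{z=L\}$. We need a convention: which boundary circle of $C_L$ plays the role of $\Gamma_1$. We fix the conformal map so that $\Gamma_1$ corresponds to $\{z=-L\}$. The inner normal there is $\partial_z$, so $\frac1{2\pi}\int_{\mathbb S^1}\partial_z\psi_L\,d\theta=1$. Any other harmonic function that is locally constant on the boundary, vanishes at $z=-L$, and has unit flux must be of the form $a(L+z)$; this follows from uniqueness of the Dirichlet problem once the boundary constants are fixed. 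The flux condition then forces $a=1$, which gives uniqueness.

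For ii), let $\Phi:\Omega'\to\Omega$ be conformal. Since we are in dimension two, $\Delta_{g'}(\psi\circ\Phi)=\rho^{2}(\Delta_g\psi)\circ\Phi$ for the conformal factor $\rho$, so $\psi\circ\Phi$ is harmonic. We assume $\Phi$ extends to a homeomorphism of the closures; this holds for smooth doubly connected domains by Carathéodory-type boundary regularity. Then $\Phi$ maps boundary components to boundary components, so $\psi\circ\Phi$ is locally constant on $\partial\Omega'$ with values $0$ and $c$. We orient $\Phi$ so that the component $\Gamma_1'$ is sent to $\Gamma_1$.

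The key point in ii) is that the flux $\oint_{\Gamma_1}\star d\psi=\int_{\Gamma_1}\partial_N\psi$ is conformally invariant. There are two ways to see this:
\begin{itemize}
\item the Hodge star on $1$-forms in dimension two depends only on the conformal class, so $\Phi^\star(\star d\psi)=\star\,d(\psi\circ\Phi)$, and line integrals pull back;
\item alternatively, $\partial_N\psi\,ds$ is invariant because the normal derivative scales by $\rho^{-1}$ while the arclength scales by $\rho$.
\end{itemize}
Either way the normalization is preserved, and uniqueness of the capacitary function gives the claim.

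For iii), take a conformal diffeomorphism $\Phi:\Omega\to C_L$. Applying ii) with $\Omega'=\Omega$ and target $C_L$ shows that $\psi_L\circ\Phi$ is the capacitary function of $\Omega$. Its boundary value on $\Gamma_2$ is $2L$, hence $c=2L$. The sign is positive provided the orientation convention of the flux is respected, since the inner normal on $\Gamma_1$ points into $\Omega$.

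For iv), we have $d\psi=\Phi^\star dz$, and $\Phi$ is a diffeomorphism up to the boundary, so $|d\psi|=\rho^{-1}|dz|\neq0$ wherever $\rho>0$. The main obstacle is this boundary regularity: we need $\Phi$ to extend smoothly, with nonvanishing derivative, up to $\partial\Omega$. For smooth boundaries this follows from the Kellogg–Warschawski theorem applied locally, or alternatively from the Hopf lemma applied to $\psi$ on each boundary component. Since $\psi$ attains its minimum on $\Gamma_1$ and its maximum on $\Gamma_2$, the Hopf lemma gives $\partial_N\psi\neq0$ there. I would use the Hopf lemma as the cleaner route for the closure, and the conformal pullback for the interior.
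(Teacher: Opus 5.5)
Your argument is correct and follows the route the paper has in mind. The paper gives no separate proof; it relies on the direct computation on $C_L$, on the two-dimensional conformal invariance of harmonicity and of the flux $\int_{\Gamma_1}\partial_N\psi$, and on the pullback $\psi=\Phi^{\star}\psi_L$, which is exactly your outline. Your additions fill in points the paper leaves implicit: you require $\Phi$ to send $\Gamma_1'$ to $\Gamma_1$ (without this, ii) fails as literally stated, since swapping the boundary components yields $c$ minus the capacitary function), and you use the Hopf lemma to get $\partial_N\psi\neq0$ on $\partial\Omega$ instead of invoking smooth boundary extension of $\Phi$.
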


We now take, as a magnetic potential on $\Omega$, the $1$-form:
\begin{equation}\label{ap}
A=\star d\psi.
\end{equation}

\begin{lem} We have:
\begin{enumerate}[i)] 
\item The $1$-form $A$ is smooth, closed and co-closed, hence harmonic in $\Omega$.
\item The flux of $A$ around $\Gamma_1$ (oriented as $\star N$) is equal to $1$. 
\item  On $C_L$ we have $A=-d\theta$.
\end{enumerate}
\end{lem}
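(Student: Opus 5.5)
The three claims are local or topological, so I will prove them directly from the definition $A=\star d\psi$ and from Lemma \ref{lemma_conformal}.

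For i), smoothness of $A$ comes from smoothness of $\psi$ up to the boundary, which holds because $\psi=\Phi^\star\psi_L$ with $\Phi$ a conformal diffeomorphism smooth up to $\partial\Omega$ (see Lemma \ref{lemma_conformal}). Co-closedness follows from $\delta A=-{\rm div}(\nabla^\perp\psi)=0$, since a rotated gradient is always divergence free. Equivalently, $\delta\star d\psi=\pm\star d d\psi=0$.

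Closedness is $dA=d\star d\psi=\pm(\Delta\psi)\,{\rm dvol}=0$, which holds because $\psi$ is harmonic. A closed and co-closed $1$-form is harmonic, since $(d\delta+\delta d)A=0$.

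For iii), I work on $C_L$, where $\psi_L=L+z$ by Lemma \ref{lemma_conformal}\,i). Then $A=\star dz$, and the stated orientation convention $\star dz=-d\theta$ gives $A=-d\theta$.

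I also need that $A$ transforms naturally under conformal maps. In dimension two, the Hodge star on $1$-forms depends only on the conformal class and the orientation. So if $\Phi:\Omega\to C_L$ is an orientation-preserving conformal map, then
\[
\Phi^\star(\star d\psi_L)=\star d(\Phi^\star\psi_L)=\star d\psi .
\]
Hence $A$ corresponds to $-d\theta$ under the identification of $\Omega$ with $C_L$. If the orientations disagree, I compose $\Phi$ with the reflection $\theta\mapsto-\theta$.

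For ii), I compute directly. Orient $\Gamma_1$ by the unit tangent $T=\star N$. Then
\[
A(T)=\star d\psi(\star N)=-d\psi(\star^2N)=d\psi(N)=\partial_N\psi .
\]
This gives
\[
\frac{1}{2\pi}\oint_{\Gamma_1}A=\frac{1}{2\pi}\int_{\Gamma_1}\partial_N\psi=1
\]
by the normalisation of $c$ in \eqref{green}.

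As a consistency check on $C_L$: at $z=-L$ the inner normal is $\partial_z$, and $\star\partial_z$ is $\pm\partial_\theta$. With the orientation where $\star dz=-d\theta$, the tangent is $-\partial_\theta$, and $-d\theta(-\partial_\theta)=1$, so the flux is $1$ again.

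The main point requiring care is the sign and orientation bookkeeping in ii) and iii), namely that $\star N$ is the orientation intended and that the conformal map preserves orientation. The analytic content is trivial once Lemma \ref{lemma_conformal} is available.
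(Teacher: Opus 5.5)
Your proposal is correct and follows essentially the same route as the paper. Harmonicity of $\psi$ gives that $\star d\psi$ is closed and co-closed, the identity $\star d\psi(\star N)=d\psi(N)$ together with the normalisation of $c$ gives the flux, and $\star dz=-d\theta$ on $C_L$ gives iii). Your extra remarks on the conformal invariance of $\star$ and your sign check at $z=-L$ agree with the paper's conventions, though the lemma itself does not need them.
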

\begin{proof}
The proof is immediate. Since $\psi$ is harmonic in $\Omega$ it follows by direct computation that $\star d\psi$ is closed and co-closed. Moreover
$$
\dfrac {1}{2\pi}\oint_{\Gamma_1}A=\frac 1{2\pi}\int_{\Gamma_1}\star d\psi(\star N)=\frac{1}{2\pi}\int_{\Gamma_1}d\psi(N)=\frac{1}{2\pi}\int_{\Gamma_1}\partial_N\psi=1.
$$ 
Point $iii)$ is a direct computation.
\end{proof}

\subsection{Gauge invariance} 

Fix a base point $x_0\in\Omega$ and define a (multivalued) function on $\Omega$ as follows:
$$
\Theta(x):=\int_{c_x}A
$$
where $c_x$ is any curve joining $x_0$ to $x$. Since $A$ is closed, and the flux around a loop in $\Omega$ is an integer, one sees that choosing another such curve $c'_{x}$ one gets that 
$$
\int_{c_x}A-\int_{c_x'}A\in 2\pi\mathbb Z
$$
This means that the function 
$$
e^{i\Theta(x)}
$$
is well-defined and smooth in $\Omega$. Since $A$ is closed and has integer flux around any loop in $\Omega$, from the well-known result in \cite{Sh} we deduce that the spectrum of the magnetic Laplacian with potential $1$-form $A$ coincides with the usual Neumann spectrum, because the operators $\Delta_A$ and $\Delta$ are unitarily equivalent:
$$
\Delta_A=e^{i\Theta}\Delta e^{-i\Theta}.
$$
We summarize as follows.

\begin{lem}\label{one} \begin{enumerate}[i)]
\item For all  $j\in\mathbb N$:
$$
\lambda_j(\Omega,A)=\mu_j(\Omega).
$$
In particular, if $u$ is an eigenfunction of $\Delta_A$ then $e^{-i\Theta}u$ is an eigenfunction of $\Delta$, associated to the same eigenvalue. Vice versa, if $v$ is an eigenfunction of $\Delta$ then $e^{i\Theta}v$ is an eigenfunction of $\Delta_{A}$, associated to the same eigenvalue.
\item One has $\lambda_1(\Omega,A)=0$ with eigenspace spanned by the function $e^{i\Theta}$.
\end{enumerate}
\end{lem}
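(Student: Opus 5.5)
The plan is to obtain both parts of Lemma \ref{one} from the unitary conjugation $u\mapsto e^{i\Theta}u$. Part ii) then follows from part i) together with the fact that the constants form the first Neumann eigenspace of $\Delta$.

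\emph{Well-definedness of $e^{i\Theta}$.} Fix the base point $x_0$. By Stokes, since $A$ is closed, $\int_{c_x}A$ depends only on the homotopy class of $c_x$. Two curves from $x_0$ to $x$ differ by a loop homotopic to $k\Gamma_1$ for some $k\in\mathbb Z$, so the two integrals differ by $k\oint_{\Gamma_1}A=2\pi k$, using the flux computation of the previous lemma. Hence $e^{i\Theta}$ is well defined. It is smooth up to $\overline\Omega$ because $A=\star d\psi$ is smooth on $\overline\Omega$, and it satisfies
$$
d e^{i\Theta}=i e^{i\Theta}A,\qquad |e^{i\Theta}|=1.
$$

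\emph{Intertwining identities.} For $v\in H^1(\Omega)$ put $u=e^{i\Theta}v$. The Leibniz rule gives
$$
d^Au=du-iuA=e^{i\Theta}\,dv+iue^{i\Theta}\cdot 0\cdot v\,,
$$
more precisely $du=e^{i\Theta}dv+ie^{i\Theta}vA$, so that $d^Au=e^{i\Theta}dv$. Consequently $|d^Au|=|dv|$ and $|u|=|v|$ pointwise. Multiplication by $e^{i\Theta}$ is a bijection of $H^1(\Omega)$ onto itself, with inverse multiplication by $e^{-i\Theta}$, and it preserves the dimension of subspaces. Applying the min-max characterisations \eqref{minmaxA} and the analogous one for $\mu_j$, the Rayleigh quotients coincide, so $\lambda_j(\Omega,A)=\mu_j(\Omega)$ for all $j$.

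For the eigenfunction statement we compare the quadratic forms. For all $v,w\in H^1$,
$$
\int_\Omega\langle d^A(e^{i\Theta}v),\overline{d^A(e^{i\Theta}w)}\rangle=\int_\Omega\langle dv,\overline{dw}\rangle .
$$
The weak (Neumann) eigen-equations therefore correspond exactly. If $u$ solves $\int\langle d^Au,\overline{d^A\phi}\rangle=\lambda\int u\bar\phi$ for all $\phi\in H^1$, then taking $\phi=e^{i\Theta}w$ shows that $e^{-i\Theta}u$ is a weak Neumann eigenfunction of $\Delta$ with eigenvalue $\lambda$. The converse direction is symmetric. Elliptic regularity then upgrades these to classical eigenfunctions satisfying the respective boundary conditions. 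This is the formal content of $\Delta_A=e^{i\Theta}\Delta e^{-i\Theta}$. It is also worth checking directly on $\partial\Omega$: $d^Au(N)=e^{i\Theta}dv(N)$, so the two Neumann conditions are equivalent.

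\emph{Part ii).} Part i) gives $\lambda_1(\Omega,A)=\mu_1(\Omega)=0$. Since $\Omega$ is connected, the $\mu_1$-eigenspace consists of the constants. Its image under the correspondence of part i) is spanned by $e^{i\Theta}$.

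The only step requiring care is the first one, the single-valuedness of $e^{i\Theta}$. It relies on $\Omega$ having fundamental group $\mathbb Z$ generated by a loop homotopic to $\Gamma_1$, and on the flux being exactly $1$, an integer. Everything else is routine bookkeeping with the Leibniz rule.
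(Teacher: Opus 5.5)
Your proof is correct and follows the paper's route. Single-valuedness of $e^{i\Theta}$ comes from the integer flux of $A$, and then you use the unitary conjugation $\Delta_A=e^{i\Theta}\Delta e^{-i\Theta}$: the paper simply quotes this from \cite{Sh}, whereas you verify it directly via $d^A(e^{i\Theta}v)=e^{i\Theta}dv$ together with the min-max and weak formulations (the stray ``$\cdot 0\cdot v$'' in your first display for $d^Au$ is a typo that your ``more precisely'' line already corrects, so just delete it).
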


\section{The radial spectrum}\label{sec_radialspectrum}
Let $\Omega$ be a bounded doubly connected  domain in $M_K$ with conformal modulus $L$. Let 
$$
S\doteq |\Omega|
$$ be the area of $\Omega$. Let  $\psi$ be the capacitary function of $\Omega$ as in \eqref{green}. We introduce the function space:
$$
\mathcal R(\Omega)=\{u{\rm\ real\ valued}: u=g\circ\psi, \quad g\in H^1(0,2L)\}\subset H^1(\Omega).
$$
Thus, $\mathcal R(\Omega)$ consists of all functions in $H^1(\Omega)$ which are constant on the level curves of $\psi$. We set:
\begin{equation}\label{K1-u}
\kappa_1(\Omega,A)=\min_{0\ne u\in\mathcal R(\Omega)}\frac{\int_{\Omega}|d^{A}u|^2}{\int_{\Omega}|u|^2}.
\end{equation}
We call $\kappa_1(\Omega,A)$ the first {\it radial eigenvalue} of $\Omega$ for the potential $A$. Note that $\kappa_1(\Omega,A)$ {\it does not need} to be an eigenvalue of $\Delta_{A}$ (hence of $\Delta$) in $\Omega$.  

Define a function $G:(0, S)\to (0,\infty)$ by:
\begin{equation}\label{Gp}
G(a)\doteq\int_{\psi=\beta(a)}\dfrac{1}{\abs{d\psi}}
\end{equation}
where $\beta(a)$ is such that the super level set $\{\psi>\beta(a)\}$ has area $a$. 
Since $\psi$ has no critical points in $\bar\Omega$, 
we remark that $G$ is bounded above and below by positive constants.

The next result characterizes $\kappa_1(\Omega,A)$ as the minimizer of a one-dimensional Rayleigh quotient associated with a Sturm-Liouville problem.

\begin{lem}\label{two} We have the following:
\begin{enumerate}[i)]
\item 
\begin{equation}\label{K1}
\kappa_1(\Omega,A)=2\pi\min_{0\ne f\in H^1(0,S)}\frac{\int_0^{ S}\left(G(a)f'(a)^2+\frac{f(a)^2}{G(a)}\right)da}{\int_0^Sf^2(a)da}.
\end{equation}
\item $\kappa_1(\Omega,A)$ is the first eigenvalue of the  following Sturm-Liouville problem in $(0,S)$:
\begin{equation}\label{SL1}
\begin{cases}
-(Gf')'+\frac{1}{G}f=\frac{\kappa}{2\pi} f\,, & {\rm in\ }(0,S)\,,\\
f'(0)=f'(S)=0.
\end{cases}
\end{equation}
\item The Sturm-Liouville problem in ii) is regular.
\end{enumerate}
\end{lem}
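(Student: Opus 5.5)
We will prove Lemma \ref{two} by reducing the Rayleigh quotient in \eqref{K1-u} to a one-dimensional quotient. The tools are the coarea formula along the level lines of $\psi$ and a reparametrisation by enclosed area. This works because $\psi$ has no critical points in $\overline\Omega$ (Lemma \ref{lemma_conformal} iv)).

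\emph{Numerator.} Take $u=g\circ\psi\in\mathcal R(\Omega)$, real valued, with $A=\star d\psi$. Then $d^Au=g'(\psi)\,d\psi-i\,g(\psi)\,\star d\psi$. Since $u$ is real and $|\star d\psi|=|d\psi|$, we get
$$
|d^Au|^2=\big(g'(\psi)^2+g(\psi)^2\big)|d\psi|^2 .
$$

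\emph{Change of variable.} Define $a(t)=|\{\psi>t\}|$, which is a smooth decreasing bijection $(0,2L)\to(0,S)$; $\beta$ is its inverse. By the coarea formula, $a'(t)=-\int_{\psi=t}|d\psi|^{-1}=-G(a(t))$. Moreover $\int_{\psi=t}|d\psi|=\int_{\psi=t}\partial_N\psi$, which is the flux of $d\psi$ across the level line. Because $\psi$ is harmonic, this flux is independent of $t$, and it equals its value on $\Gamma_1$, namely $2\pi$.

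Set $f(a)=g(\beta(a))$. Since $\beta'(a)=-1/G(a)$, we have $g'(\psi)=-G f'$ on the level line $\{\psi=\beta(a)\}$. The coarea formula then gives
$$
\int_\Omega|u|^2=\int_0^{2L}g(t)^2\int_{\psi=t}\frac1{|d\psi|}\,dt=\int_0^S f(a)^2\,da ,
$$
$$
\int_\Omega|d^Au|^2=\int_0^{2L}\big(g'^2+g^2\big)\int_{\psi=t}|d\psi|\,dt=2\pi\int_0^{2L}(g'^2+g^2)\,dt .
$$
Using $dt=da/G$ in the last integral, it becomes $2\pi\int_0^S\big(Gf'^2+f^2/G\big)\,da$.

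\emph{Function spaces.} The map $g\mapsto f$ is a bijection $H^1(0,2L)\to H^1(0,S)$, because $\beta$ is a bi-Lipschitz diffeomorphism: $G$ is bounded above and below by positive constants. This proves i).

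\emph{Parts ii) and iii).} The functional in i) is the quadratic form of the operator $-(Gf')'+f/G$ on $H^1(0,S)$ with no boundary constraint. Its minimiser therefore satisfies the Euler–Lagrange equation \eqref{SL1} together with the natural boundary conditions $Gf'=0$, hence $f'=0$, at $0$ and $S$. Its minimum is the lowest eigenvalue, by standard Sturm–Liouville theory. For regularity, note that $G$ and $1/G$ are continuous and bounded away from zero on $[0,S]$: $G$ extends continuously to the endpoints since the level lines are smooth up to $\Gamma_1,\Gamma_2$. The interval is also finite, so the problem is regular.

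\emph{Main obstacle.} The delicate point is the computation of $|d^Au|^2$ together with the constancy of the flux $\int_{\psi=t}|d\psi|=2\pi$. These two facts produce the weight $1/G$ on the potential term, and getting the sign and orientation conventions right (so that the flux on $\Gamma_1$ is exactly $2\pi$) is where care is needed.
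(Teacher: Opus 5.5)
Your proof is correct and follows essentially the same route as the paper: the pointwise identity $|d^Au|^2=(g'(\psi)^2+g(\psi)^2)|d\psi|^2$, the coarea formula with the constant flux $\int_{\psi=t}|d\psi|=2\pi$, and the change of variable $t=\beta(a)$ with $\beta'=-1/G$. Your extra justifications are valid refinements of steps the paper leaves implicit or states differently: you use the real-valuedness of $u$ where the paper invokes pointwise orthogonality of $A$ and $d\psi$, you derive the constancy of the flux from harmonicity, and you note the bi-Lipschitz bijection $H^1(0,2L)\to H^1(0,S)$ and the natural boundary conditions.
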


\begin{proof}  Let $u=g\circ\psi$. One has:
$$
d^Au=(g'\circ\psi)d\psi-i(g\circ\psi)A;
$$
since $A$ and $d\psi$ are pointwise orthogonal by definition \eqref{ap}:
$$
\begin{aligned}
\abs{d^Au}^2&=(g'\circ\psi)^2\abs{d\psi}^2+(g\circ\psi)^2\abs{A}^2\\
&=\Big((g'\circ\psi)^2+(g\circ\psi)^2\Big)\abs{d\psi}^2.
\end{aligned}
$$
By the coarea formula
\begin{equation}\label{u-g}
\begin{aligned}
\int_{\Omega}\abs{d^Au}^2&=\int_0^{2L}\Big(g'(t)^2+g(t)^2\Big)\int_{\psi=t}\abs{d\psi}\,dt\\
&=2\pi\int_0^{2L}\Big(g'(t)^2+g(t)^2\Big)dt.
\end{aligned}
\end{equation}
The last equality follows from
$$
\int_{\psi=t}\abs{d\psi}=\int_{\psi=t}\partial_N\psi=2\pi.
$$
We now change variable as follows. Write:
$$
\alpha(t)\doteq\abs{\{\psi>t\}}=\int_t^{2L}\left(\int_{\psi=s}\dfrac{1}{\abs{d\psi}}\right)ds
$$
so that
$$
\alpha'(t)=-\int_{\psi=t}\dfrac{1}{\abs{d\psi}}.
$$
Since $\psi$ has no critical points in $\overline\Omega$, then $\abs{d\psi}\geq c>0$. This implies that $\alpha: (0,2L)\to (0,S)$ is smooth, strictly decreasing, and admits a smooth inverse $\beta:(0,S)\to (0,2L)$. We set:
$$
t=\beta(a), \quad g\circ\beta=f,
$$
hence $g(t)=f(a)$. Since $\beta(\alpha(t))=t$ we have
$
\beta'(\alpha(t))\alpha'(t)=1, 
$
which means
$$
\beta'(a)=\dfrac{1}{\alpha'(\beta (a))}=-\dfrac{1}{\int_{\psi=\beta(a)}\frac{1}{\abs{d\psi}}}.
$$
Defining the function $G:(0,S)\to(0,\infty)$ as in \eqref{Gp} by
\begin{equation*}
G(a)=\int_{\psi=\beta(a)}\frac{1}{\abs{d\psi}},
\end{equation*}
we conclude that 
$$
\beta'(a)=-\dfrac{1}{G(a)}.
$$
Now 
$$
\threearray
{g'(t)=g'(\beta(a))=(g\circ\beta)'(a)\cdot\dfrac{1}{\beta'(a)}=\dfrac{f'(a)}{\beta'(a)}=-f'(a)G(a),}
{g(t)=f(a),}
{dt=\beta'(a)da=-\dfrac{da}{G(a)}.}
$$
We conclude
$$
\begin{aligned}
\int_{\Omega}\abs{d^Au}^2&=2\pi\int_0^{2L}\Big(g'(t)^2+g(t)^2\Big)dt\\
&=2\pi\int_0^{ S}\left(G(a)f'(a)^2+\frac{f(a)^2}{G(a)}\right)da.
\end{aligned}
$$
On the other hand, if $u=g\circ\psi$:
$$
\begin{aligned}
\int_{\Omega}u^2&=\int_0^{2L}g(t)^2\int_{\psi=t}\dfrac{1}{\abs{d\psi}}\,dt\\
&=\int_0^Sf(a)^2\,da.
\end{aligned}
$$
This proves $i)$. The proof of $ii)$ is standard Sturm-Liouville theory and follows directly from $i)$. $iii)$ is a consequence of the fact that the weight $G$ is smooth and bounded above and below by positive constants. 
\end{proof}

We remark that we have defined $\kappa_1(\Omega,A)$ as the minimum of a Rayleigh quotient over a certain subspace of $H^1(\Omega)$. Then $\kappa_1(\Omega,A)$ turns out to be the first eigenvalue of a suitable Sturm-Liouville problem \eqref{SL1}. We can define a whole sequence of eigenvalues $\{\kappa_j(\Omega,A)\}_{j=1}^{\infty}$ via the min-max procedure, which then coincides with the spectrum of \eqref{SL1}. We call this spectrum the {\it radial spectrum}. Note that these need not  be actual eigenvalues of $\Delta_{A}$ in $\Omega$. However, as we shall see, sometimes they are. In this paper we just work with $\kappa_1(\Omega,A)$.

\subsection{The case of the cylinder}\label{sub:disk}

On the cylinder $C_L=(-L,L)\times \mathbb S^1$ the capacitary function is
$$
\psi_L(z,\theta)=L+z
$$
hence $\{\psi_L=t\}$ is the unit circle $\mathbb S^1\times\{t-L\}$, so that
$$
\abs{\{\psi_L=t\}}=2\pi, \quad \alpha(t)=\abs{\{\psi_L>t\}}=2\pi(2L-t).
$$
Hence, if $\beta(a)=\alpha^{-1}(a)$, inverting $a=2\pi(2L-t)$ gives
$$
\beta(a)=2L-\frac{a}{2\pi}.
$$
Now $d\psi_L=dz$, hence 
$\abs{d\psi_L}=1$.
Finally 
$$
G_L(a):=\int_{\psi_L=\beta(a)}\dfrac{1}{|d\psi_L|}=\abs{\{\psi_L=\beta(a)\}}\cdot\dfrac{1}{\abs {d\psi_L}}=2\pi.
$$

\subsection{The case of an annulus}\label{sub:sphericalcup}
Let $\Omega$ be a bounded doubly connected domain in $M_K$, with boundary $\partial\Omega=\Gamma_1\cup\Gamma_2$, where $\Gamma_i$ bound the domain $\omega_i$ in the complement of $\Omega$. Recall that by our convention  $|\omega_1|\leq|\omega_2|$ so that $|\omega_1|<\infty$. Let now $\Omega^\star=A_{r_1,r_2}\subset M_K$ be the symmetric annulus centered at some $p\in M_K$ of radii $0<r_1<r_2$ such that, if $d_1$ is a disk of radius $r_1$ centered at $p$, then $|\Omega^\star|=|\Omega|=S$ and $|d_1|=|\omega_1|$, as in Definition \ref{omstar}. Let   
$$
h_1:=|d_1|=|\omega_1|.
$$
Let $\psi^\star$ denote the  capacitary function of $\Omega^\star$ as in \eqref{green},  $A^\star$ the associated magnetic potential, and $G^\star$ the function defined in \eqref{Gp} when $\Omega=\Omega^\star$ and $\psi=\psi^\star$. In polar coordinates $(r,\theta)$, where $r$ is the geodesic distance from the center of the annulus, the capacitary function $\psi^\star$ is an explicit radial function, hence, proceeding as in the case of the cylinder $C_L$, we see that $$
G^\star(a)=\frac{(h_1+S-a)(4\pi-K(h_1+S-a))}{2\pi}.
$$ However, the same fact will also drop from the following isoperimetric inequality, which will be needed later. 
Let $G:(0,S)\to (0,\infty)$ be defined as in \eqref{Gp} in terms of the capacitary function $\psi$ of $\Omega$. Then we have:

\begin{lem}\label{lem31} One has, on $(0,S)$:
\begin{equation}\label{ineq*}
G^\star\leq G 
\end{equation}
 with equality a.e. if and only if $\Omega$ is congruent to $\Omega^{\star}$. Moreover
 \begin{equation}\label{ineq2*}
G^\star(a)=\frac{(h_1+S-a)(4\pi-K(h_1+S-a))}{2\pi}.
 \end{equation}
for all $a\in (0,S)$.
\end{lem}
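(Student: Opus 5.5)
Let $\Omega_t:=\{\psi<t\}\cup\overline{\omega_1}$, the simply connected region bounded by the level curve $\{\psi=t\}$ and containing $\omega_1$. The plan is to compare the level curves of $\psi$ with geodesic circles through two facts: the classical isoperimetric inequality in $M_K$, and a Cauchy--Schwarz inequality that uses the normalisation $\int_{\psi=t}|d\psi|=2\pi$. The area of $\Omega_t$ is
$$
V(t)=h_1+S-\alpha(t),
$$
and at $t=\beta(a)$ we have $V=h_1+S-a$. Since $\omega_1$ is the component of smaller area, $V(t)\le|M_K\setminus\omega_2|$, and $\Omega_t$ is simply connected. The isoperimetric inequality in $M_K$ for simply connected regions therefore gives
$$
\ell(t)^2\ \ge\ V(t)\,\bigl(4\pi-KV(t)\bigr),
$$
where $\ell(t)=|\{\psi=t\}|$, with equality only for geodesic disks. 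On $\mathbb S^2$ this is valid for all areas in $(0,4\pi)$. Equivalently, the level curve bounds the complementary region of area $4\pi-V$, and the inequality is symmetric under $V\mapsto 4\pi-V$.

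Next apply Cauchy--Schwarz on the level curve:
$$
\ell(t)^2=\Bigl(\int_{\psi=t}1\Bigr)^2\le\int_{\psi=t}|d\psi|\int_{\psi=t}\frac1{|d\psi|}=2\pi\,G(\alpha(t)).
$$
Combining the two inequalities at $t=\beta(a)$ yields
$$
G(a)\ \ge\ \frac{(h_1+S-a)\bigl(4\pi-K(h_1+S-a)\bigr)}{2\pi}.
$$

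For the annulus $\Omega^\star$, the capacitary function $\psi^\star$ is radial. Its level sets are therefore geodesic circles, and $|d\psi^\star|$ is constant on each of them. Both inequalities are then equalities, and the area bookkeeping is the same, since $|d_1|=h_1$ and $|\Omega^\star|=S$. This proves \eqref{ineq2*}, and consequently $G^\star\le G$, which is \eqref{ineq*}.

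For the equality case, suppose $G=G^\star$ a.e. Both functions are continuous, so equality holds everywhere. Then, for every $t$:
\begin{enumerate}[i)]
\item equality holds in the isoperimetric inequality, so every $\Omega_t$ is a geodesic disk;
\item equality holds in Cauchy--Schwarz, so $|d\psi|$ is constant on each level curve.
\end{enumerate}

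The main obstacle is to show from this that the disks are concentric. I would argue as follows. The level curves are geodesic circles, so their normal vectors are the gradient directions of $\psi$. Since $|d\psi|$ is constant along each level curve, $\psi$ is a function of the distance to these circles in a neighbourhood. Concretely, the family of circles $\{\psi=t\}$ is equidistant: the normal displacement between nearby level curves is $dt/|d\psi|$, which is constant along each curve. An equidistant family of geodesic circles in $M_K$ must be concentric, because parallel curves of a circle are circles with the same centre. Letting $t\to0$ and $t\to 2L$, the boundary curves $\Gamma_1$ and $\Gamma_2$ are then concentric geodesic circles. The area constraints identify the radii as $r_1$ and $r_2$, so $\Omega$ is congruent to $\Omega^\star$. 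On $\mathbb S^2$ the hypothesis $|\omega_1|\le|\omega_2|$ fixes which side of the circles is which, consistent with the normalisation $r_1<\pi-r_2$.
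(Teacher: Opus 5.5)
Your proposal is correct and follows essentially the same route as the paper. It uses Cauchy--Schwarz on each level curve with the normalisation $\int_{\psi=t}|d\psi|=2\pi$, then the isoperimetric inequality in $M_K$ for the simply connected region of area $h_1+S-a$ bounded by $\{\psi=\beta(a)\}$ and containing $\omega_1$, and, in the equality case, circular level curves with $|d\psi|$ constant on each, hence a concentric family. Your equidistance argument for concentricity and your use of continuity to pass from a.e.\ equality to equality everywhere just make explicit steps that the paper states briefly.
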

\begin{proof}
 For all $t\in (0,2L)$ we have
\begin{equation}\label{pre-isoperimetric}
\begin{aligned}
\abs{\{\psi=t\}}&=\int_{\psi=t}1\\
&=\int_{\psi=t}\abs{d\psi}^{\frac 12}\cdot\dfrac{1}{\abs{d\psi}^{\frac 12}}\\
&\leq \Big(\int_{\psi=t}\abs{d\psi}\Big)^{\frac 12}\cdot 
\Big(\int_{\psi=t}\dfrac{1}{\abs{d\psi}}\Big)^{\frac 12}\\
&=\Big(2\pi\int_{\psi=t}\dfrac{1}{\abs{d\psi}}\Big)^{\frac 12}
\end{aligned}
\end{equation}
with equality if and only if $\abs{d\psi}$ is constant on $\psi=t$. Recall that $\alpha(t)\doteq |\{\psi>t\}|$ and $\beta$ is its inverse. Taking $t=\beta(a)$ and passing to the variable $a$, \eqref{pre-isoperimetric} reads
$$
\abs{\{\psi=\beta(a)\}}^2\leq 2\pi G(a).
$$
We use the well-known isoperimetric inequality for bounded domains in $M_K$: $\ell^2\geq a(4\pi-Ka)$, where $a$ is the area and $\ell$ is the boundary length. We refer e.g., to \cite{chavel_isop} for a proof. Let $D_a$ be the simply connected component of
$M_K\setminus\{\psi=\beta(a)\}$ containing $\omega_1$. Since
$a=|{\psi>\beta(a)}|$, we have $|D_a|=h_1+S-a$. Hence $|\partial D_a|^2\geq |D_a|(4\pi-K|D_a|)$. Therefore
$$
\abs{\{\psi=\beta(a)\}}^2\geq (h_1+S-a)(4\pi-K(h_1+S-a)),
$$
from which we get \eqref{ineq*} 
with equality if and only if each level set $\{\psi=t\}$ is a circle and $\abs{d\psi}$ is constant on $\psi=t$. This last condition implies that the level sets are parallel to one another, hence $\psi$ must be a radial function; hence $\Omega$ is congruent to $\Omega^\star$. Finally, if $\Omega$ is congruent to $\Omega^\star$  we have equality everywhere, in particular we have \eqref{ineq2*} for all $a\in (0,S)$.
\end{proof}

\section{Proof of Theorem \ref{main_isop_annuli}}\label{sec:4}
 Theorem \ref{main_isop_annuli} will follow by combining gauge invariance (Lemma \ref{one}) with three results that we state here below, namely Theorems \ref{three}, \ref{four} and \ref{barycenter}.

\medskip

The first result is a comparison between  $\kappa_1(\Omega,A)$ and $\kappa_1(\Omega^\star,A^\star)$.

\begin{thm}\label{three} We have
$$
\kappa_1(\Omega,A)\leq \kappa_1(\Omega^{\star}, A^{\star}),
$$
 with equality if and only if $\Omega$ is congruent to $\Omega^{\star}$.
\end{thm}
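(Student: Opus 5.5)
We plan to work in the "$t$-scale" rather than the area scale $a$, because in that variable the numerator of the Rayleigh quotient of Lemma \ref{two} loses its weight, and only the denominator depends on the geometry. Set
$$
\sigma(a):=\int_0^a\frac{da'}{G(a')},\qquad \sigma^\star(a):=\int_0^a\frac{da'}{G^\star(a')},\qquad a\in(0,S).
$$
By the proof of Lemma \ref{two} we have $\sigma(a)=2L-\beta(a)$, so $\sigma$ maps $(0,S)$ onto $(0,2L)$, and likewise $\sigma^\star$ maps $(0,S)$ onto $(0,2L^\star)$, where $L^\star$ is the modulus of $\Omega^\star$. For $f=h\circ\sigma$ we get $G f'^2\,da=h'(\sigma)^2\,d\sigma$ and $f^2/G\,da=h^2\,d\sigma$. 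Hence
$$
\frac{\kappa_1(\Omega,A)}{2\pi}=\min_{h}\frac{\int_0^{2L}(h'^2+h^2)\,d\sigma}{\int_0^S h(\sigma(a))^2\,da},
$$
and the same formula holds for $\Omega^\star$ with $\sigma^\star$ and $L^\star$. Lemma \ref{lem31} gives $G\ge G^\star$, hence $1/G\le 1/G^\star$. Integrating yields the key pointwise comparison
$$
\sigma(a)\le\sigma^\star(a)\quad\text{for all }a\in(0,S),\qquad\text{in particular } L\le L^\star .
$$

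Next we transplant. Let $h^\star$ be a positive first eigenfunction of the radial problem for $\Omega^\star$, written in the $\sigma$ variable on $(0,2L^\star)$. As a test function for $\Omega$ we use $h^\star$ restricted to $(0,2L)$, i.e.\ $f=h^\star\circ\sigma$. The numerator satisfies
$$
\int_0^{2L}(h^{\star\prime 2}+h^{\star 2})\,d\sigma\le\int_0^{2L^\star}(h^{\star\prime 2}+h^{\star 2})\,d\sigma,
$$
because $2L\le 2L^\star$. For the denominator, suppose $h^\star$ is nonincreasing in $\sigma$. Then $\sigma(a)\le\sigma^\star(a)$ gives $h^\star(\sigma(a))\ge h^\star(\sigma^\star(a))>0$ for every $a$, so
$$
\int_0^S h^\star(\sigma(a))^2\,da\ \ge\ \int_0^S h^\star(\sigma^\star(a))^2\,da .
$$
Combining the two bounds gives $\kappa_1(\Omega,A)\le\kappa_1(\Omega^\star,A^\star)$.

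For the equality case, equality forces $L=L^\star$ and $h^\star(\sigma(a))=h^\star(\sigma^\star(a))$ for a.e.\ $a$. If $h^\star$ is strictly monotone, this gives $\sigma\equiv\sigma^\star$, hence $G=G^\star$ a.e., and Lemma \ref{lem31} then shows that $\Omega$ is congruent to $\Omega^\star$. The converse is trivial.

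The main obstacle is the monotonicity (strict, for the equality case) of $h^\star$. It solves $-h''+h=\frac{\kappa^\star}{2\pi}W^\star h$ with Neumann conditions, where $W^\star(\sigma)=G^\star(a^\star(\sigma))$ is explicit by \eqref{ineq2*}. Since $\kappa^\star/(2\pi)\cdot W^\star-1$ may change sign, monotonicity is not automatic. We would first try to exploit the explicit form of $W^\star$, which is monotone in $\sigma$ since $G^\star$ is a explicit function of the area $h_1+S-a$ of the disk containing $\omega_1$. The Neumann conditions together with a sign analysis of $(h')'$ should exclude interior critical points of $h^\star$, fixing the orientation (decreasing, or else increasing after replacing $a$ by $S-a$ and swapping the roles of $\Gamma_1,\Gamma_2$). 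If this fails, the fallback is a rearrangement argument: replace $h^\star$ by its monotone rearrangement with respect to the weighted measure, show that this does not increase the star Rayleigh quotient, and hence that the minimum can be realised by a monotone function. This suffices for the inequality, though the equality case would then need a separate argument.
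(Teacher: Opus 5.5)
Your change of variables, the comparison $\sigma\le\sigma^\star$ (hence $L\le L^\star$) and the transplantation estimate are correct. The step you flag as the main obstacle, however, is a genuine gap: $h^\star$ is in general not monotone when $K=1$.

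For $K\in\{0,-1\}$ monotonicity does hold, and your argument then works. On the level circle of radius $r$ one has $W^\star=G^\star=2\pi s_K(r)^2$, and $\sigma^\star$ is strictly decreasing in $r$. So the coefficient in $h^{\star\prime\prime}=\bigl(1-\kappa^\star s_K(r)^2\bigr)h^\star$ is strictly increasing in $\sigma$. Since $\int_0^{2L^\star}h^{\star\prime\prime}\,d\sigma=0$ and $h^\star>0$, it changes sign exactly once, whence $h^{\star\prime}<0$ inside. This also gives your equality case.

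For $K=1$ both claims fail:
\begin{itemize}
\item $G^\star=x(4\pi-x)/2\pi$ with $x=h_1+S-a$ is not monotone as soon as $r_2>\pi/2$, so $W^\star$ is not monotone.
\item $h^\star$ itself can have an interior maximum. At $\sigma=0$ (the circle $r=r_2$) the coefficient is $1-\kappa^\star\sin^2 r_2$, and at $\sigma=2L^\star$ it is $1-\kappa^\star\sin^2 r_1$, with $\sin r_1<\sin r_2$ because $r_1<\pi-r_2$. If $\kappa^\star\sin^2 r_2<1$, then $h^{\star\prime\prime}>0$ near both endpoints. Together with $h^{\star\prime}(0)=h^{\star\prime}(2L^\star)=0$, this forces $h^{\star\prime}>0$ near $0$ and $h^{\star\prime}<0$ near $2L^\star$.
\end{itemize}
This happens, e.g., for $\mathbb S^2$ minus two small polar caps. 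Testing $SL_1$ with $\sin r$ shows $\kappa^\star=\mu_{11}$ is at most about $2$ (indeed $v_{11}\approx\sin r$), while $1/\sin^2 r_2\to\infty$. Then $h^\star(\sigma(a))\ge h^\star(\sigma^\star(a))$ is no longer available and the denominator comparison collapses.

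Swapping $\Gamma_1$ and $\Gamma_2$ does not help. The rearrangement fallback is also impossible: $\kappa_1(G^\star)$ is simple, so its minimisers are exactly the multiples of this non-monotone $h^\star$. Every monotone competitor therefore has a strictly larger quotient.

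The idea you are missing is that monotonicity is not needed, only the bound $|h'/h|<1$ in your variable, i.e.\ $|R|<1$ for $R=Gf'/f$ in the area variable. This holds for every admissible weight. Indeed, $R$ solves $R'=(1-R^2)/G-\kappa_1(G)/(2\pi)$ with $R(0)=R(S)=0$. If $|R|\ge 1$ somewhere, then $R'<0$ there. This propagates backward (if $R\ge 1$) or forward (if $R\le -1$) to an endpoint, contradicting $R=0$.

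The paper combines this bound with the Feynman--Hellmann formula along $G_t=(1-t)G^\star+tG$:
\[
\frac{d}{dt}\kappa_1(G_t)=2\pi\int_0^S f_t^2\bigl(R_t^2-1\bigr)\frac{G-G^\star}{G_t^2}\,da\le 0,
\]
with strict inequality when $G>G^\star$ on a set of positive measure. Together with Lemma \ref{lem31}, this handles all three curvatures and the equality case simultaneously.
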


\medskip

We recall that a radial eigenvalue $\kappa_1(\Omega,A)$ is not necessarily a Neumann eigenvalue of $\Delta_A$. The next result states that the first radial eigenvalue for $\Omega^{\star}$ is actually a Neumann eigenvalue of $\Delta_{A^\star}$ (hence of $\Delta$), more precisely, it is the second eigenvalue.

\begin{thm}\label{four} One has $\kappa_1(\Omega^{\star}, A^\star)=\lambda_2(\Omega^{\star},A^\star)$.
%
\end{thm}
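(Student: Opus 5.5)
We plan to transport the radial Rayleigh quotient on $\Omega^\star$ back to the ordinary Neumann problem through the gauge transformation, and then to identify $\lambda_2(\Omega^\star,A^\star)=\mu_2(\Omega^\star)$ by separation of variables on the rotationally symmetric annulus. In polar coordinates $(r,\theta)$ centred at $p$, the capacitary function $\psi^\star$ is radial. A conformal map to $C_L$ sends $\theta$ to the cylinder angle, so by Lemma 2.2 iii) we have $A^\star=\pm d\theta$, and we may take $\Theta=\pm\theta$. By Lemma \ref{one}, $u\mapsto e^{-i\Theta}u$ is a unitary map preserving the quotients. It sends $\mathcal R(\Omega^\star)$ onto the space of functions $g(r)e^{\mp i\theta}$ with $g$ real, and $|d(g e^{\mp i\theta})|^2=g'^2+g^2/\mathrm{sn}_K(r)^2$. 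Hence
$$
\kappa_1(\Omega^\star,A^\star)=\nu_1:=\min_{g}\frac{\int_{r_1}^{r_2}\left(g'^2+\frac{g^2}{\mathrm{sn}_K^2}\right)\mathrm{sn}_K\,dr}{\int_{r_1}^{r_2}g^2\,\mathrm{sn}_K\,dr},
$$
where $\mathrm{sn}_K$ is $\sin r$, $r$ or $\sinh r$ according to $K$. This is the first Neumann eigenvalue of the angular mode $k=1$. The minimiser is attained, and by the Sturm--Liouville theory of Lemma \ref{two} it is an eigenvalue of $\Delta$ on $\Omega^\star$ with eigenfunctions $g_1(r)\cos\theta$ and $g_1(r)\sin\theta$.

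Next we separate variables. The Neumann spectrum of $\Omega^\star$ is the union over $k\ge0$ of the spectra $\nu_j^{(k)}$ of the radial problems with potential $k^2/\mathrm{sn}_K^2$ and Neumann conditions at $r_1,r_2$. Since the quotient is increasing in $k$, we have $\nu_1^{(k)}\ge\nu_1^{(1)}=\nu_1$ for $k\ge1$. The first $k=0$ eigenvalue is $0=\mu_1$. So $\mu_2(\Omega^\star)=\min\{\nu_1,\nu_2^{(0)}\}$, and everything reduces to proving $\nu_1\le\nu_2^{(0)}$. This comparison is the main step.

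To prove it, let $v$ be the radial eigenfunction for $\mu:=\nu_2^{(0)}$, so that $v''+\frac{\mathrm{cn}_K}{\mathrm{sn}_K}v'+\mu v=0$ and $v'(r_1)=v'(r_2)=0$, where $\mathrm{cn}_K=\mathrm{sn}_K'$. Differentiate this equation and use the identity $(\mathrm{cn}_K/\mathrm{sn}_K)'=-1/\mathrm{sn}_K^2$, which follows from $\mathrm{cn}_K^2+K\,\mathrm{sn}_K^2=1$. Then $w:=v'$ satisfies
$$
w''+\frac{\mathrm{cn}_K}{\mathrm{sn}_K}w'-\frac{w}{\mathrm{sn}_K^2}+\mu w=0,\qquad w(r_1)=w(r_2)=0.
$$
This is exactly the $k=1$ equation with eigenvalue $\mu$, and it holds uniformly in $K$. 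Moreover $w\not\equiv0$, since $v$ is not constant. Multiply by $w\,\mathrm{sn}_K$ and integrate by parts; the boundary terms vanish because of the Dirichlet conditions. The $k=1$ quotient of $w$ then equals $\mu$, so $\nu_1\le\mu$.

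Hence $\mu_2(\Omega^\star)=\nu_1=\kappa_1(\Omega^\star,A^\star)$, and by Lemma \ref{one} this equals $\lambda_2(\Omega^\star,A^\star)$. The inequality $\nu_1\le\mu$ is in fact strict: otherwise $w$ would be a minimiser, hence a Neumann eigenfunction with $w'=0$ at the endpoints, and together with $w=0$ there this would force $w\equiv0$. The only delicate points are checking the curvature identity and the sign conventions relating $\Theta$ to $\theta$; the rest is standard.
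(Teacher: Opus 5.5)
Your proposal is correct and follows essentially the same route as the paper: separation of variables reduces the statement to comparing the first $k=1$ Neumann eigenvalue with the second $k=0$ eigenvalue, and you make this comparison exactly as the paper does, by differentiating the radial eigenfunction to produce a Dirichlet solution of the $k=1$ equation. The only cosmetic difference is how $\kappa_1(\Omega^\star,A^\star)$ is identified with $\mu_{11}$: you gauge the radial Rayleigh quotient directly into the $k=1$ quotient, whereas the paper observes that the positive radial function $v_{11}$ lies in the $\lambda_2(\Omega^\star,A^\star)$-eigenspace and must therefore be the first radial eigenfunction.
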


A radial eigenfunction, as just said, does not need to be an eigenfunction of $\Delta_A$. Moreover it does not need to be orthogonal to the first eigenfunction $e^{i\Theta}$ of $\Delta_A$, so that it cannot be used to estimate from above $\lambda_2(\Omega,A)$ using the min-max principle \eqref{minmaxA}. However, if $\Omega$ is balanced in the sense of Definition \ref{defn_sym}, the orthogonality condition is automatically satisfied.

\begin{thm}\label{barycenter} If $\Omega$ is a balanced, bounded doubly connected domain, then
$$
\lambda_2(\Omega, A)\leq \kappa_1(\Omega,A).
$$
\end{thm}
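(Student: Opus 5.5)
Let $u=g\circ\psi\in\mathcal R(\Omega)$ be a first radial eigenfunction, i.e.\ a minimiser in \eqref{K1-u}; it exists by Lemma \ref{two}, since the Sturm--Liouville problem \eqref{SL1} is regular. The plan is to use $u$ directly as a test function in the min-max characterisation \eqref{minmaxA} for $\lambda_2(\Omega,A)$. For this I take the two-dimensional space $U=\mathrm{span}\{e^{i\Theta},u\}$. By Lemma \ref{one}, $e^{i\Theta}$ spans the eigenspace of $\lambda_1(\Omega,A)=0$. If $u\perp e^{i\Theta}$ in $L^2(\Omega;\mathbb C)$, then for $v=\alpha e^{i\Theta}+\beta u$ we have $d^Av=\beta d^Au$, because $d^Ae^{i\Theta}=0$. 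The Rayleigh quotient of $v$ is then $|\beta|^2\int|d^Au|^2/(|\alpha|^2\int 1+|\beta|^2\int u^2)\le\kappa_1(\Omega,A)$, and \eqref{minmaxA} gives $\lambda_2(\Omega,A)\le\kappa_1(\Omega,A)$. The whole proof therefore reduces to the orthogonality $\int_\Omega u\,e^{-i\Theta}=0$.

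To check this I pass to the cylinder. Let $\Phi:\Omega\to C_L$ be the conformal map furnished by the balancing hypothesis, with $\Omega$ isometric to $(C_L,\rho^2(dz^2+d\theta^2))$. By Lemma \ref{lemma_conformal}, $\psi=\psi_L\circ\Phi$, so in these coordinates $\psi=L+z$. Since $\star$ on $1$-forms is conformally invariant in dimension two, $A=\star d\psi$ becomes $-d\theta$ (point iii) of the lemma on $A$). Choosing the base point at $\theta=0$, we get $\Theta=-\theta$, possibly up to an additive constant and with a sign depending on orientation, which is harmless. Also $u=g(L+z)$, and the area element is $\rho^2\,dz\,d\theta$. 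Hence
$$
\int_\Omega u\,e^{-i\Theta}=e^{ic}\int_{-L}^{L}g(L+z)\left(\int_{\mathbb S^1}\rho^2(z,\theta)e^{i\theta}\,d\theta\right)dz=0
$$
by the balancing condition in Definition \ref{defn_sym}. If the sign convention produces $e^{-i\theta}$ instead, I take complex conjugates, using that $\rho$ and $g$ are real.

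The only delicate point, and the main place to be careful, is the identification of $\Theta$ and $A$ in the cylinder coordinates consistently with the orientation conventions ($\star dz=-d\theta$ and the normalisation of the flux). Beyond that I only need to note that $u\in H^1(\Omega)$ and $e^{i\Theta}$ is smooth and bounded, so all the integrals above are legitimate. Once orthogonality is established, the min-max step is routine.
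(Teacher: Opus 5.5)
Your proposal is correct and follows the same route as the paper. You pull back to the balanced cylinder coordinates, where the radial minimiser depends only on $z$ and $e^{-i\Theta}$ becomes $e^{\pm i\theta}$ up to a unimodular constant. The balancing condition then gives orthogonality to the ground state $e^{i\Theta}$, and min-max yields $\lambda_2(\Omega,A)\le\kappa_1(\Omega,A)$. Your explicit test space $\mathrm{span}\{e^{i\Theta},u\}$ (using $d^Ae^{i\Theta}=0$) and your handling of the orientation and sign ambiguities just spell out details the paper leaves implicit.
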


The proof of Theorem \ref{main_isop_annuli} is achieved by combining Lemma \ref{one} and Theorems \ref{three}, \ref{four} and \ref{barycenter}:
$$
\begin{aligned}
\mu_2(\Omega)&=\lambda_2(\Omega,A)\quad\text{by Lemma \ref{one}}\\
&\leq \kappa_1(\Omega,A)\quad\text{by Theorem \ref{barycenter}}\\
&\leq \kappa_1(\Omega^{\star},A^\star)\quad\text{by Theorem \ref{three}}\\
&=\lambda_2(\Omega^{\star},A^\star)\quad\text{by Theorem \ref{four}}\\
&=\mu_2(\Omega^{\star})\quad\text{by Lemma \ref{one}}.
\end{aligned}
$$
Moreover, if $\mu_2(\Omega)=\mu_2(\Omega^\star)$, then $\kappa_1(\Omega,A)=\kappa_1(\Omega^{\star},A^\star)$, and from Theorem \ref{three} we see that $\Omega$ must be congruent to $\Omega^{\star}$. \qed


\section{Proof of Theorem \ref{three}}\label{sub_3}

The proof of Theorem \ref{three} is a consequence of the isoperimetric inequality (Lemma \ref{lem31}), and a monotonicity result for the first eigenvalue of a weighted Sturm-Liouville problem with respect to the weight. 
\medskip

Let $G:(0,S)\to (0,\infty)$ be a smooth, bounded, positive function, and $\kappa_1(G)$ be defined by
\begin{equation}\label{RayleghG}
\kappa_1(G):=2\pi\min_{0\ne f\in H^1(0,S)}\frac{\int_0^{ S}\left(G(a)f'(a)^2+\frac{f(a)^2}{G(a)}\right)da}{\int_0^{S}f^2(a)da}.
\end{equation}
Note that $\kappa_1(\Omega,A)=\kappa_1(G)$ when $G$ is defined as in \eqref{Gp}.

\begin{lem}\label{32} If $G\geq G^\star$ on $(0,S)$ then
$$
\kappa_1(G^\star)\geq \kappa_1(G).
$$
If $G>G^\star$ on a set of positive measure, the inequality is strict.
\end{lem}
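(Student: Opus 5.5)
The Rayleigh quotient \eqref{RayleghG} contains $G$ both in the numerator term $Gf'^2$ and through $1/G$, so the claim cannot come from comparing the two quotients pointwise for a fixed test function. My plan is instead to \emph{transplant} the first eigenfunction of $G^\star$ into a test function for $G$, along the change of variables used in the proof of Lemma \ref{two}. Under that change of variables the energy becomes the translation-invariant form $\int (g'^2+g^2)\,dt$.

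\textbf{Step 1 (coordinates).} For a weight $G$ set $s_G(a)=\int_0^a \frac{db}{G(b)}$, with total length $\ell_G=s_G(S)$. Writing $f(a)=g(s_G(a))$ gives
\begin{equation*}
\int_0^S\Big(Gf'^2+\frac{f^2}{G}\Big)da=\int_0^{\ell_G}(g'^2+g^2)\,ds,\qquad \int_0^S f^2\,da=\int_0^{\ell_G} g(s)^2\,\widetilde G(s)\,ds ,
\end{equation*}
where $\widetilde G=G\circ s_G^{-1}$. Hence $\kappa_1(G)/2\pi$ is the first Neumann eigenvalue of $-g''+g=\frac{\kappa}{2\pi}\widetilde G\, g$ on $(0,\ell_G)$. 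Since $G\ge G^\star$, we have $s_G\le s_{G^\star}$ pointwise and $\ell_G\le\ell_{G^\star}$.

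\textbf{Step 2 (properties of the model eigenfunction).} Let $f^\star$ be the first eigenfunction for $G^\star$ and $g^\star=f^\star\circ s_{G^\star}^{-1}$. I will use the explicit form \eqref{ineq2*} of $G^\star$ together with standard Sturm--Liouville arguments to show the following. The function $g^\star$ is positive (it is a first eigenfunction with positive potential term). It is monotone in $s$; I expect it to increase toward the end where the weight $\widetilde{G^\star}$ is larger, using $(g^{\star})''=(1-\frac{\kappa}{2\pi}\widetilde{G^\star})g^\star$ and the Neumann conditions.

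\textbf{Step 3 (test function).} I will take as test function for $G$ the restriction of $g^\star$ to the subinterval of $(0,\ell_{G^\star})$ of length $\ell_G$ placed at the end where $g^\star$ is large, and transport it back via $s_G$. Restricting to a subinterval can only decrease the numerator. It then remains to show that the denominator does not decrease, i.e.
\begin{equation*}
\int g^{\star 2}\,\widetilde G\;\ge\;\int_0^{\ell_{G^\star}} g^{\star 2}\,\widetilde{G^\star},
\end{equation*}
with the left integral taken over the chosen subinterval. Combined with Step 1, this yields $\kappa_1(G)\le\kappa_1(G^\star)$.

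To prove this denominator inequality, I plan to compare the cumulative area functions $s\mapsto s_G^{-1}(s)$ and $s\mapsto s_{G^\star}^{-1}(s)$. These are the integrals of $\widetilde G$ and $\widetilde{G^\star}$, both end at total area $S$, and $G\ge G^\star$ makes $\widetilde G$ dominate in the sense of distribution functions. I would then integrate by parts against $(g^{\star 2})'$, which has a sign by Step 2.

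\textbf{Strictness.} If $G>G^\star$ on a set of positive measure, then either $\ell_G<\ell_{G^\star}$, so a strictly positive piece of $\int(g'^2+g^2)$ is lost, or the inequality in the integration by parts is strict. In either case the inequality in the lemma is strict.

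\textbf{Main obstacle.} I expect the hardest step to be the denominator comparison together with the monotonicity of $g^\star$. That is where the specific shape of $G^\star$ from \eqref{ineq2*} enters, and on $\mathbb S^2$ the sign of $4\pi-2K(h_1+S-a)$ has to be controlled using $|\omega_1|\le|\omega_2|$. If the transplantation with this alignment fails, my first alternative is to align at the opposite end. My second alternative is a Prüfer/Riccati argument: write $w=Gf'/f$, which satisfies $w'=\frac1G-\frac{\kappa}{2\pi}-\frac{w^2}{G}$ with $w(0)=w(S)=0$, and compare these Riccati equations for $G$ and $G^\star$ directly.
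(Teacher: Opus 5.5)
\textbf{Gap: Step 2 (monotonicity of $g^\star$) fails on $\mathbb S^2$, and the main plan depends on it.}

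Your transplantation argument is correct whenever $\widetilde{G^\star}$ is nonincreasing in $s$. Put $\lambda^\star=\kappa_1(G^\star)/2\pi$. Then $(g^\star)''=(1-\lambda^\star\widetilde{G^\star})g^\star$ has a nondecreasing coefficient whose $g^\star$-weighted integral vanishes, so $g^\star$ is strictly decreasing. Let $F=s_G^{-1}$, extended by $S$ on $[\ell_G,\ell_{G^\star}]$, and $F^\star=s_{G^\star}^{-1}$. You get $F\ge F^\star$ with equality at both ends, hence $\int_0^{\ell_G}g^{\star 2}\widetilde G-\int_0^{\ell_{G^\star}}g^{\star 2}\widetilde{G^\star}=-\int_0^{\ell_{G^\star}}(F-F^\star)(g^{\star 2})'\ge 0$. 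This settles $K\le 0$, and $K=1$ with $|\omega_2|\ge 2\pi$.

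On $\mathbb S^2$ with $|\omega_2|<2\pi$, the hypothesis $|\omega_1|\le|\omega_2|$ does not control the sign you need: by the explicit formula, $G^\star$ is increasing near $a=0$ as soon as $|\omega_2|<2\pi$. In fact $g^\star$ need not be monotone. Take $r_1=\eps/2$, $r_2=\pi-\eps$. Then $\widetilde{G^\star}(0)=G^\star(0)=2\pi\sin^2 r_2$ and $\widetilde{G^\star}(\ell_{G^\star})=G^\star(S)=2\pi\sin^2 r_1$ are $O(\eps^2)$. Meanwhile $\kappa_1(G^\star)$ stays bounded: the real radial test function $\sin r$ has $|d^{A^\star}\sin r|^2=1+\cos^2 r$, and its quotient tends to $2$. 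Hence $(g^\star)''>0$ at both endpoints. With the Neumann conditions, $g^\star$ increases near $s=0$ and decreases near $s=\ell_{G^\star}$, so it has an interior maximum (it is close to $\sin r$). Neither end alignment is then justified, and the integration by parts against $(g^{\star 2})'$ has no sign. So the main plan does not prove the lemma.

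\textbf{What the paper does, and how your fallback could be completed.} The paper uses no information on the shape of $G^\star$; its argument works for any positive weights $G^\star\le G$. It interpolates $G_t=(1-t)G^\star+tG$ and applies Feynman--Hellmann: $\frac{d}{dt}\kappa_1(G_t)=2\pi\int_0^S (G_t^2f_t'^2-f_t^2)\frac{G-G^\star}{G_t^2}\,da$. The integrand is negative because $R=G_tf_t'/f_t$ (your $w$) satisfies $|R|<1$ on $(0,S)$. Indeed, $R'=\frac{1-R^2}{G_t}-\frac{\kappa_1(G_t)}{2\pi}$, so $|R|\ge 1$ forces $R'<0$, which is incompatible with $R(0)=R(S)=0$.

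Your second fallback can be completed, but it needs exactly this bound, which your proposal does not identify. Given $1-w^2>0$, suppose $\kappa_1(G)\ge\kappa_1(G^\star)$. Then $w'\le\frac{1-w^2}{G^\star}-\frac{\kappa_1(G^\star)}{2\pi}$, strictly on a set of positive measure if $G>G^\star$ there or if $\kappa_1(G)>\kappa_1(G^\star)$. The linear inequality for $w-w^\star$ with $w(0)=w^\star(0)=0$ then gives $w(S)<w^\star(S)$, contradicting $w(S)=w^\star(S)=0$. This yields both the inequality and its strict version, but as written that route is only named, not carried out.
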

\begin{proof}
For $t\in [0,1]$ define $G_t:(0,S)\to (0,\infty)$ by
$$
G_t=(1-t)G^\star+tG.
$$
and let $f_t$ be the positive, $L^2$-normalised minimiser in \eqref{RayleghG} with $G=G_t$. As in the proof of Lemma \ref{two}, $\kappa_1(G_t)$ is the first eigenvalue of the regular Sturm-Liouville problem \eqref{SL1} (with $G_t$ replacing $G$); as $\kappa_1(G_t)$ is simple, we can apply the Feynman-Hellmann formula (see \cite[VII-\S4, p. 408, formula 4.56]{kato1}) which in our case gives
$$
\dfrac{d}{dt}\kappa_1(G_t)=2\pi\int_0^S\Big(G_t(a)^2f'_t(a)^2-f_t(a)^2\Big)\dfrac{G(a)-G^\star(a)}{G_t(a)^2}\,da.
$$
It will then be enough to show that $G_t^2f_t'^2- f_t^2<0$ on $(0,S)$.  Equivalently, let us fix an arbitrary $t\in [0,1]$ and consider the smooth function $R:(0,S)\to\mathbb R$ defined by
$$
R=\dfrac{G_tf'_t}{f_t}.
$$
It is continuous in $[0,S]$ and, since $f_t'(0)=f_t'(S)=0$ and $f_t(0)>0$, $f_t(S)>0$, we see
\begin{equation}\label{rmzero}
R(0)=R(S)=0.
\end{equation}
It is enough to show that $\abs{R(a)}<1$ for all $a\in (0,S)$. We differentiate $R$ and get:
\begin{equation}\label{rprime}
R'=\dfrac{1-R^2}{G_t}-\frac{\kappa_1(G_t)}{2\pi}
\end{equation}
which implies that if $\abs{R}\geq 1$ then $R'<0$. 

\medskip

{\bf First case.} Assume  that there exists $a_0\in (0,S)$ such that $R(a_0)\geq 1$. Then $R'(a_0)<0$ and one sees easily that $R$ is decreasing (and positive) on $(0,a_0)$. Therefore
 $\lim_{a\to 0}R(a)>1$  which is a contradiction with $R(0)=0$.

\smallskip

{\bf Second case.} Assume that there exists $a_0$ such that $R(a_0)\leq -1$.  Then $R'(a_0)<0$ and $R'<0$ on $(a_0,S)$. Therefore $\lim_{a\to S}R(a)<-1$ which is a contradiction with $R(S)=0$.
The proof is complete.
\end{proof}

Theorem \ref{three} now follows by taking $G$ as in \eqref{Gp} and recalling by Lemma \ref{lem31} that $G\geq G^\star$.
\qed

\section{Proof of Theorem \ref{four}}\label{sub_4}

Let $\Omega^{\star}=A_{r_1,r_2}$ be the symmetric annulus centered at some $p\in M_K$ (see \eqref{A12} for the definition).  We briefly recall the description of the spectrum of the Neumann Laplacian on $\Omega^{\star}$. 

We use polar coordinates $(r,\theta)$ around $p$
 and separate variables. As usual, we find a basis of $L^2(\Omega^{\star})$ of eigenfunctions in the form $u(r,\theta)=v(r)e^{ik\theta}$, where $k\in\mathbb Z$ (see e.g., \cite[\S II.5]{chavel}).  Expressing the  Laplacian $\Delta$ in polar coordinates, we see that $u$ as above is an eigenfunction of the Laplacian on $\Omega^{\ast}$ satisfying the Neumann boundary condition  if and only if the radial part $v(r)$ is an eigenfunction of the following Sturm-Liouville problem:
\begin{equation}\label{SLK}
SL_k:
\begin{cases}
-v''-\frac{s_K'(r)}{s_K(r)}\,v'+\frac{k^2}{s_K^2(r)}v=\mu v\,, & {\rm in\ }(r_1,r_2)\,,\\
 v'(r_1)=v'(r_2)=0,
\end{cases}
\end{equation}
where
\begin{equation}\label{sk}
s_K(r):=\begin{cases}
\sinh(r) & {\rm\ if\ }K=-1\,,\\
r & {\rm\ if\ }K=0\,,\\
\sin(r) & {\rm\ if\ }K=1\,.\\
\end{cases}
\end{equation}

 For each fixed $k\in\mathbb Z$,  problem $SL_k$ has a countable set of eigenvalues, denoted 
$$
\mu_{k1}\leq \mu_{k2}\leq\cdots\leq\mu_{kj}\leq\cdots\nearrow+\infty
$$
with associated eigenfunctions $v_{kj}(r)$, where $j=1,2,\dots$. Note that $\mu_{kj}=\mu_{(-k)j}$ for all $k\in\mathbb Z$, hence we may confine the analysis of the eigenvalues to $k=0,1,...$.
The smallest eigenvalue is $\mu_1(\Omega^{\star})=\mu_{01}=0$ with eigenspace given by the constants. The second eigenvalue (lowest positive eigenvalue) is denoted $\mu_2(\Omega^{\star})$: it could be either $\mu_{02}$ (radial) or  $\mu_{11}$ (angular mode $k=1$). We claim that
\begin{equation}\label{second_N}
\mu_{11}<\mu_{02}
\end{equation}
Assuming \eqref{second_N} holds, we see that $\mu_2(\Omega^{\star})=\mu_{11}$. In particular,  the  corresponding eigenspace is spanned by $e^{i\theta}v_{11}(r)$ and $e^{-i\theta}v_{11}(r)$. 
Using gauge invariance (Lemma \ref{one}), we see that $\lambda_2(\Omega^{\star},A^\star)=\mu_2(\Omega^{\ast})=\mu_{11}$ has multiplicity $2$; since $A^{\star}=-d\theta$ and $e^{i\Theta}=e^{-i\theta}$,  the corresponding eigenspace is  spanned by $v_{11}(r)$ and $e^{-2i\theta}v_{11}(r)$. Then $\lambda_2(\Omega^{\star},A^\star)$ admits an eigenfunction which is real and radial (i.e., constant on the level lines of $\psi^\star$), and this is $v_{11}$; since $v_{11}$ does not change sign, it must be a first radial eigenfunction of $\Delta_{A^\star}$. In conclusion
$$
\kappa_1(\Omega^{\star},A^\star)=\lambda_2(\Omega^{\star},A^\star)
$$
as asserted. It remains to prove \eqref{second_N}.

 Let $v_{02}$ be an eigenfunction associated with $\mu_{02}$. Set
$$
g_{02}:=v_{02}'.
$$
Observing that $(\frac{s_K'}{s_K})'(r)=-\frac{1}{s_K^2(r)}$, one checks that
$$
-g_{02}''(r)-\frac{s_K'(r)}{s_K(r)}g_{02}'(r)+\frac{1}{s_K^2(r)}g_{02}(r)=\mu_{02} g_{02}(r),
$$
and this follows by differentiating the differential equation in \eqref{SLK} with $k=0$, $v=v_{02}$, $\mu=\mu_{02}$. At the endpoints we have
$$
g_{02}(r_1)=g_{02}(r_2)=0.
$$
This means that $\sigma_{02}$ is an eigenvalue of the following Dirichlet problem:
\begin{equation}\label{ODE_annulus_1_2}
\begin{cases}
-v''(r)-\frac{s_K'(r)}{s_K(r)}v'(r)+\frac{1}{s_K^2(r)}v(r)=\lambda v(r)\,, & {\rm in\ }(r_1,r_2)\,,\\
v(r_1)=v(r_2)=0.
\end{cases}
\end{equation}
In particular
$$
\mu_{02}\geq\lambda_{11}
$$
where $\lambda_{11}$ is the first eigenvalue of \eqref{ODE_annulus_1_2}. Recall that $\mu_{11}$ is the first eigenvalue of the Neumann problem:
\begin{equation}
\begin{cases}
-v''(r)-\frac{s_K'(r)}{s_K(r)}v'(r)+\frac{1}{s_K^2(r)}v(r)=\mu v(r)\,, & {\rm in\ }(r_1,r_2)\,,\\
v'(r_1)=v'(r_2)=0.
\end{cases}
\end{equation}
Hence $\lambda_{11}>\mu_{11}$ and we conclude that 
$$
\mu_{02}\geq\lambda_{11}>\mu_{11}
$$
as claimed.
\qed

\section{Proof of Theorem \ref{barycenter}}\label{sec_barycenter}

We consider a conformal map $\Psi:C_L\to\Omega$. From Lemma \ref{lemma_conformal} we see that
\begin{itemize}
\item $\Psi^\star\psi=\psi_L=L+z$;
\item $\Psi^\star e^{i\Theta}=e^{-i\theta}$;
\item if $u:\Omega\to\mathbb R$ is constant on the level lines of $\psi$, then the function $w:=\Psi^\star u$ depends only on $z$.
\end{itemize}
Hence, for any $u$ which is real and constant on the level lines of $\psi$ we have
$$
\int_{\Omega}ue^{-i\Theta}=\int_{C_L}we^{i\theta}\rho^2=\int_{-L}^Lw(z)\int_0^{2\pi}\rho^2(z,\theta)e^{i\theta}d\theta=0
$$
by the assumption of $\Omega$ being balanced. Now, let $u$ be a minimizer in the definition \eqref{K1-u} of $\kappa_1(\Omega,A)$. It is real and constant on the level lines of $\psi$, and the previous identity shows that it is orthogonal to the first eigenspace of $\Delta_A$, which is spanned by $e^{i\Theta}$ (see Lemma \ref{one}). By the min-max principle:
$$
\lambda_2(\Omega,A)\leq\frac{\int_{\Omega}|d^Au|^2}{\int_{\Omega}|u|^2}=\kappa_1(\Omega,A).
$$
This concludes the proof. \qed

\bibliography{bibliography.bib}
\bibliographystyle{abbrv}

\end{document}